\documentclass[11pt, reqno]{amsart}
\usepackage{amscd,amsfonts,amsmath,amssymb,amstext,amsthm}
\usepackage[all]{xy}
\usepackage{tikz-cd}
\usepackage{tikz}
\usetikzlibrary{calc}
\usepackage{xcolor}
\usepackage{cancel}
\usepackage{verbatim}
\usepackage{enumitem}
\usepackage{graphicx}
\usepackage{hyperref}
\usepackage{cleveref}
\usepackage{subcaption}

\theoremstyle{plain}
\newtheorem{theorem} {Theorem} [section]
\newtheorem{lemma} [theorem]{Lemma}
\newtheorem{proposition}[theorem]{Proposition}
\newtheorem{corollary} [theorem]{Corollary}

\newtheorem{algorithm}[theorem]{Algorithm}
\theoremstyle{definition}
\newtheorem{definition}[theorem]{Definition}
\newtheorem{example}[theorem]{Example}

\newtheorem{remark}[theorem]{Remark}

\numberwithin{equation}{section}

\title[Ricci curvatures of Graphs]{A combinatorial approach to $\alpha$-Ricci and Lin-Lu-Yau Ricci curvatures on Graphs}
\date\today
\author{Jaewoo Jung, Young Soo Kwon, Seungjae Lee, and Jihye Park}

\address{Global Basic Research Laboratory - Algebra and Geometry of Spaces of Tensors, and Applications (GBRL-AGSTA), Daegu Gyeongbuk Institute of Science and Technology (DGIST), 333 Techno Jungang-daero, Hyeonpung-eup, Dalseong-gun, Daegu 42988, Republic of Korea}
\email{jaewoojung@dgist.ac.kr}

\address{Yeungnam University
Department of Mathematics, 280 Daehak-Ro, Gyeongsan, Gyeongbuk 38541, Republic of Korea}
\email{ysookwon@ynu.ac.kr}

\address{Kyungpook National University,
80 Daehak-ro, Buk-gu, Daegu, Gyeongbuk 41566, Republic of Korea}
\email{seungjae@knu.ac.kr}

\address{Yeungnam University
Department of Mathematics, 280 Daehak-Ro, Gyeongsan, Gyeongbuk 38541, Republic of Korea}
\email{jihyepark@ynu.ac.kr}

\subjclass[2020]{Primary 51F30; Secondary 05C12, 49Q22}
\keywords{Discrete Ricci curvature, Ollivier Ricci curvature, Lin--Lu--Yau Ricci curvature, Wasserstein distance, Optimal transport, Graph matching.}
\thanks{{Authors equally contributed to this work.}}

\begin{document}

\begin{abstract}
In this paper, we study the $\alpha$-Ricci curvature and the Lin--Lu--Yau Ricci curvature on simple, connected, and locally finite graphs. For regular graphs, we introduce a combinatorial construction of optimal transport plans realizing the $1$-Wasserstein distance and use it to derive exact formulas for the $\alpha$-Ricci curvature and the Lin--Lu--Yau Ricci curvature. This yields a combinatorial proof of the known curvature formulas. Furthermore, for non-regular graphs, we characterize conditions on the size of common neighborhoods that guarantee either non-negative or vanishing Lin--Lu--Yau Ricci curvature.
\end{abstract}

\maketitle

\section{Introduction}

Ricci curvature is fundamental to Riemannian geometry, whose positivity governs the local geometry and global structure of manifolds.
Various discretizations of Ricci curvature on graphs have been proposed, including those of Bakry and Émery \cite{BE85}, Chung and Yau \cite{CY95}, Ollivier \cite{Oli}, and Lin, Lu, and Yau \cite{LLY2011}. We focus on the notions introduced by Ollivier and by Lin, Lu, and Yau. 
Ollivier \cite{Oli} defined the $\alpha$–Ricci curvature between graph vertices using Markov chains induced by lazy random walks, while Lin, Lu, and Yau \cite{LLY2011} introduced the Lin–Lu–Yau Ricci curvature as a finer approximation to manifold Ricci curvature.

Calculations of the $\alpha$–Ricci curvature and the Lin–Lu–Yau Ricci curvature have attracted attention. Jost and Liu \cite{JL} proved an explicit formula for the $0$–Ricci curvature on trees. Bhattacharya and Mukherjee \cite{BM} established an exact formula for the $0$–Ricci curvature on bipartite graphs and graphs of girth at least $5$, and obtained a lower bound for graphs of girth $4$. More recently, Hehl \cite{Hehl25, Hehl26} established explicit formulas for the $\alpha$–Ricci curvature and the Lin–Lu–Yau Ricci curvature on locally finite graphs for adjacent vertices of equal finite degree.

One of the main ingredients of this line of research is to determine explicit values or ranges of the $1$-Wasserstein distance, either by constructing explicit $1$-Lipschitz functions or by applying results from optimal transport and linear programming. In this paper, we take a different perspective and develop a combinatorial approach for regular graphs based on the neighborhood structure of adjacent vertices.

\begin{theorem}\label{thm:main}
Let $G=(V,E)$ be a simple connected locally finite regular graph and
$\{x,y\}\in E$ with $\deg_G(x)=\deg_G(y)=d$. 
Let $A=N_G(x)\setminus N_G[y], B=N_G(y)\setminus N_G[x]$ and let $K$ be the complete bipartite graph with partite sets $A$ and $B$.  
Then $K$ admits a perfect matching
$M=M_1\cup M_2\cup M_3$ with
$M_i=\{\{u,v\}\in M:\operatorname{dist}_G(u,v)=i\},
i=1,2,3$ satisfying
\[
\frac{\kappa^\alpha(x,y)}{1-\alpha}
=
\begin{cases}
\dfrac{d+1-(3|A|-2|M_1|-|M_2|)}{d},
& \alpha\ge \dfrac1{d+1},\\[1.2ex]
\dfrac{d-(2-t)-(3|A|-2|M_1|-|M_2|)}{d}
+\dfrac{\alpha(3-t)}{1-\alpha},
& \alpha<\dfrac1{d+1},
\end{cases}
\]
where $\kappa^\alpha(x,y)$ is the $\alpha$-Ricci curvature and $t=\max_{\{u,v\}\in E(M)}\operatorname{dist}_G(u,v)
$.
\end{theorem}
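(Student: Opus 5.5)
We set up the transport problem explicitly and then pick a special perfect matching. Write $m_x^\alpha = \alpha\delta_x + \frac{1-\alpha}{d}\sum_{u\sim x}\delta_u$, and define $m_y^\alpha$ the same way. Then $\kappa^\alpha(x,y)=1-W(m_x^\alpha,m_y^\alpha)$. Let $C=N_G(x)\cap N_G(y)$, so $|A|=|B|=d-1-|C|$.

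The two measures agree on $C$, with mass $\frac{1-\alpha}{d}$ at each vertex there; this common mass stays in place. The remaining mass is:
\begin{itemize}
\item $\alpha$ at $x$ and $\frac{1-\alpha}{d}$ at $y$ for $m_x^\alpha$;
\item $\frac{1-\alpha}{d}$ at $x$ and $\alpha$ at $y$ for $m_y^\alpha$;
\item $\frac{1-\alpha}{d}$ on each vertex of $A$, which must move to $B$.
\end{itemize}
For any perfect matching of $K$, every matched pair $\{u,v\}$ has $\operatorname{dist}_G(u,v)\le 3$, via the path $u\,x\,y\,v$. So the matching gives a transport plan.

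\textbf{Upper bound for $\alpha\ge\frac1{d+1}$.} Here $\alpha\ge\frac{1-\alpha}{d}$.
\begin{itemize}
\item Send $\frac{1-\alpha}{d}$ from $x$ to $y$ along $xy$, at cost $\frac{1-\alpha}{d}$.
\item Send each $u\in A$ to its partner in $M$. The cost is $\frac{1-\alpha}{d}(|M_1|+2|M_2|+3|M_3|)=\frac{1-\alpha}{d}(3|A|-2|M_1|-|M_2|)$.
\item The surplus $\alpha-\frac{1-\alpha}{d}$ at $x$ goes to $y$, at unit cost.
\end{itemize}
The total, after simplification, gives $1-\kappa^\alpha$ matching the first formula. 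We get $W\le 1-\alpha+\frac{1-\alpha}{d}(3|A|-2|M_1|-|M_2|-1)+\dots$; this is a direct computation.

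\textbf{Upper bound for $\alpha<\frac1{d+1}$.} Now $x$ has a deficit and the edge $xy$ should carry the mass $\alpha$. We can cheapen the plan by rerouting one matched pair of maximal distance $t$. Take the pair $\{u,v\}$ with $\operatorname{dist}(u,v)=t$, and send part of $u$'s mass to $x$ and part of $y$'s mass to $v$. Each such move costs $1$ instead of $t$. This rerouting saves $(t-1)$ times the exchanged amount $\frac{1-\alpha}{d}-\alpha$, which accounts for the terms $(2-t)$ and $\alpha(3-t)/(1-\alpha)$.

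\textbf{Lower bound.} This is the main obstacle: we need a $1$-Lipschitz $f$ achieving equality, and we also need to choose $M$ so that equality is possible. I would choose $M$ first to lexicographically maximize $(|M_1|,|M_2|)$, i.e.\ the weighted matching minimizing $\sum \operatorname{dist}$ over pairs. Minimum cost perfect matching in bipartite graphs is dual to a potential problem (Egerváry/Hungarian duality). This gives potentials $p$ on $A$ and $q$ on $B$ with $q(v)-p(u)\le\operatorname{dist}(u,v)$ and equality on $M$.

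Next, set
\begin{itemize}
\item $f=0$ on $x$ and on $C$, and $f=1$ at $y$;
\item $f=p$ on $A$ and $f=q$ on $B$, suitably shifted so that $p\in\{-1,0\}$ and $q\in\{1,2\}$;
\item $f$ extended elsewhere by McShane's formula, $f(z)=\min_w\bigl(f(w)+\operatorname{dist}(w,z)\bigr)$.
\end{itemize}
Integer-valued potentials within these ranges exist because costs lie in $\{1,2,3\}$ and the graph distances through $x,y$ constrain them. I would verify the Lipschitz condition on $N[x]\cup N[y]$ by case analysis on distances: $A$–$B$ pairs, $A$–$C$ pairs, and $A$–$y$ pairs.

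Kantorovich duality then gives $\sum f\,dm_y-\sum f\,dm_x$ equal to the plan cost. In the case $\alpha<\frac1{d+1}$ we need the modified potential where $f(x)$ and $f(y)$ reflect $t$. That is, the matched pair at distance $t$ forces $f(u)=-1$ and $f(v)=2$ only when $t=3$. Handling the $t\in\{1,2\}$ subcases, where some $A$-vertex gets value $0$ and the rerouting gain vanishes appropriately, is where I expect the most delicate checking.
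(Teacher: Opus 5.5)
\textbf{Verdict: genuine gap in the lower bound.} Your upper-bound plans are the paper's plan $\pi_T$. There are bookkeeping slips: only $\alpha-\frac{1-\alpha}{d}$ should move from $x$ to $y$, and the resulting cost is $\alpha+\frac{1-\alpha}{d}(S-1)$ with $S=3|A|-2|M_1|-|M_2|$, not $1-\alpha+\cdots$.

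The real problem is the lower bound, which is the content of the theorem. Your certificate needs $f(x)=0$, $f(y)=1$, $f(A)\subseteq\{-1,0\}$ and $f(B)\subseteq\{1,2\}$, with $f(v)-f(u)\le\operatorname{dist}_G(u,v)$ and equality on $M$. You justify the existence of such potentials only by ``costs lie in $\{1,2,3\}$''. That claim is false in general. Take $C=\emptyset$, $d=4$, $A=\{a_1,a_2,a_3\}$, $B=\{b_1,b_2,b_3\}$, with $a_2\sim b_1$, $a_2\sim b_3$, $a_3\sim b_2$, and every other $A$--$B$ pair at distance $3$. This local picture is easily completed to a $4$-regular graph without creating short new $A$--$B$ paths.
\begin{itemize}
\item Both $\{a_2b_1,a_3b_2,a_1b_3\}$ and $\{a_2b_3,a_3b_2,a_1b_1\}$ are minimum-cost, with $S=5$.
\item Complementary slackness then forces $f(b_3)-f(a_1)=3$ and $f(b_3)-f(a_2)=1$, so $f(a_2)-f(a_1)=2$ in every optimal dual. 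No shift fits your box.
\item Inside your box, the distance-one pairs force $f(a_2)=f(a_3)=0$ and $f|_B\equiv 1$, so the dual value is at most $4<5$.
\end{itemize}
The correct certificate here is $f(a_1)=-1$, $f(a_2)=1$, $f(a_3)=0$, $f(b_1)=f(b_3)=2$, $f(b_2)=1$. Separately, prescribing $f=0$ on $C$ is an unneeded constraint. It fails whenever a $B$-vertex with $f=2$ has a neighbour in $C$; since $C$ carries no net mass, leave it to the McShane extension.

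\textbf{What is missing, and how to supply it.} You need that some optimal dual of the assignment problem also satisfies the Lipschitz constraints on $\{x,y\}\cup A\cup B$ that the assignment LP does not see, notably between adjacent vertices of $A$ or of $B$. Your dual route can be repaired as follows.
\begin{itemize}
\item Use weights $w=3-\operatorname{dist}_G\in\{0,1,2\}$ and take a minimum integral Egerv\'ary cover $g,h\ge 0$. Then $g,h\le 2$ and $\sum g+\sum h=3|A|-S$. Set $f(u)=g(u)-1$ and $f(v)=2-h(v)$.
\item If $u\sim u'$ in $A$, then $w(u',\cdot)\le w(u,\cdot)+1$. So minimality of the cover excludes $g(u)=0$, $g(u')=2$, and likewise in $B$.
\item All constraints involving $x,y$ follow from $f(A)\subseteq[-1,1]$ and $f(B)\subseteq[0,2]$.
\end{itemize}
This settles $\alpha\ge\frac1{d+1}$, and also the case $t=3$ of $\alpha<\frac1{d+1}$. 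For $t=2$, take $f(x)=f(y)=0$ and a K\"onig cover $T$ of $H_1$: put $f=0$ on $T$, $f=-1$ on $A\setminus T$, and $f=1$ on $B\setminus T$. For $t=1$, take $f(x)=1$, $f(y)=0$, $f|_A=0$, $f|_B=1$.

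You also must show that the lexicographically chosen $M$ is minimum-cost; counting augmenting paths in $H_1$ does this. Your ``i.e.''\ conflates lexicographic and minimum-cost choice. For $\alpha<\frac1{d+1}$ this matters: an arbitrary minimum-cost matching may have $t=2$ when another has $t=3$, and then the formula gives the wrong value.

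None of this is in your proposal, and you leave the small-$\alpha$ regime explicitly open. By contrast, the paper argues on the primal side. It fixes the $x\to y$ flow via Hehl's lemma, performs augmenting-path exchanges on an optimal plan, and handles $\alpha<\frac1{d+1}$ by a $\gamma$-decomposition.
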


To prove Theorem~\ref{thm:main}, we introduce Algorithm~\ref{alg:regular}, which explicitly constructs an optimal transport plan realizing the $1$-Wasserstein distance. This yields a combinatorial proof of explicit formulas of Hehl \cite{Hehl25, Hehl26} for the $\alpha$-Ricci and Lin--Lu--Yau Ricci curvatures, avoiding the use of Choquet’s minimization theorem and Birkhoff’s theorem.
In addition, Theorem \ref{thm:main} covers the range $\alpha<1/(d+1)$, which was not treated in those works.

In addition, for non-regular graphs we characterized the common neighborhood size that guarantees either non-negative or vanishing Lin-Lu-Yau Ricci curvature. 
\medskip
\begin{theorem}\label{thm:LLY_nonnegative}
Let $G=(V,E)$ be a simple, connected, and locally finite graph. 
For an edge $\{x, y\} \in E$, let $d_x:=\deg_G(x)$ and $d_y:=\deg_G(y)$ with $d_x < d_y$. 
Define $A := N_G(x) \setminus N_G[y]$, $B := N_G(y) \setminus N_G[x]$, $\Delta := N_G(x) \cap N_G(y)$, and let $\delta := |\Delta|$.
If
$$
\delta  \geq  \frac{2d_x d_y - 2d_x - 2d_y}{d_y + 2d_x},
$$ 
then the Lin-Lu-Yau Ricci curvature  $\kappa^{\textup{LLY}}(x,y) \geq 0$. Moreover, under the assumption $\delta \geq \frac{2d_x d_y - 2d_x - 2d_y}{d_y + 2d_x}$,  $\kappa^{\textup{LLY}}(x,y)  = 0$ if and only if the following conditions are simultaneously satisfied:
    \begin{enumerate}
        \item 
        $
        \delta = \frac{2d_x d_y - 2d_x - 2d_y}{d_y + 2d_x}
        $
        \item $E({\Delta}, B) = \emptyset$
        \item $\operatorname{dist}_G(u,w) = 3$ for all $u \in A$ and $w \in B$
    \end{enumerate}    
where {$E(A,B)$ is the set of edges between $A$ and $B$.}  
\end{theorem}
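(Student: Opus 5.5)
We plan to use the Münch--Wojciechowski limit-free formula $\kappa^{\mathrm{LLY}}(x,y)=\kappa^{\alpha}(x,y)/(1-\alpha)$ for $\alpha\ge \max\{1/(d_x+1),1/(d_y+1)\}$, or equivalently the Kantorovich duality expression in terms of $1$-Lipschitz functions. Concretely, we fix $\alpha=1/(d_x+1)$, so that $m_x^\alpha$ is uniform with mass $1/(d_x+1)$ on $N_G[x]$. Then $m_y^\alpha$ has mass $1/(d_x+1)$ at $y$ and mass $\frac{d_x}{(d_x+1)d_y}$ on each neighbor of $y$. Since $d_x<d_y$, every neighbor of $y$ receives less mass than every neighbor of $x$.

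The whole problem then reduces to bounding $W_1(m_x^\alpha,m_y^\alpha)$. Nonnegativity is the inequality $W_1\le 1-\alpha=d_x/(d_x+1)$.

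\textbf{Upper bound: an explicit transport plan.}
\begin{itemize}
\item The mass at $x$ and at each $z\in\Delta$ that $m_y^\alpha$ also demands stays in place at cost $0$.
\item The surplus at each vertex of $\Delta\cup\{x\}$, namely $\frac1{d_x+1}-\frac{d_x}{(d_x+1)d_y}$ per vertex, is sent at distance $1$ to vertices of $B$ (each of these vertices is adjacent to $y$).
\item The mass at $A$, of total $|A|/(d_x+1)$ with $|A|=d_x-1-\delta$, is sent to the remaining deficit in $B$ at distance at most $3$.
\end{itemize}
Writing out the total cost gives
\[
\frac{1}{d_x+1}\Big[3|A|+(\delta+1)\Big(1-\tfrac{d_x}{d_y}\Big)\Big]-(\text{savings}),
\]
up to the bookkeeping of how the surplus from $\Delta\cup\{x\}$ is divided. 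The condition $W_1\le d_x/(d_x+1)$ should reduce exactly to $\delta(d_y+2d_x)\ge 2d_xd_y-2d_x-2d_y$. The key step is to verify this algebra, which means choosing the routing so that the worst case uses distance $3$ from $A$ to $B$ and distance $2$ from $\Delta$ to $B$. We therefore expect the extremal plan to send $\Delta$-surplus to $B$ at cost $2$ when $E(\Delta,B)=\emptyset$. We expect this to be where the coefficients $2d_x$ and $d_y$ in the threshold originate.

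\textbf{Equality case.} For the "if" direction, we construct a $1$-Lipschitz function $f$ certifying $W_1\ge d_x/(d_x+1)$:
\[
f=0 \text{ on } A,\qquad f=1 \text{ on } \{x\}\cup\Delta,\qquad f=2 \text{ at } y,\qquad f=3 \text{ on } B .
\]
This $f$ is $1$-Lipschitz precisely when $E(\Delta,B)=\emptyset$ and $\operatorname{dist}(A,B)=3$; note that $A$ and $y$ are at distance $2$, and vertices of $\Delta\cup\{x\}$ are adjacent to $y$. We then compute $\sum f\,(m_y^\alpha-m_x^\alpha)$ and check that it equals $d_x/(d_x+1)$ when $\delta$ attains the threshold.

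For the "only if" direction, we argue by contraposition using the upper-bound plan:
\begin{itemize}
\item if $\delta$ is strictly above the threshold, the cost is strictly smaller;
\item if some edge joins $\Delta$ and $B$, rerouting one unit of surplus from cost $2$ to cost $1$ strictly lowers the cost;
\item if some pair $u\in A$, $w\in B$ has $\operatorname{dist}(u,w)\le 2$, routing mass from $u$ to $w$ saves at least $\min(\text{mass})>0$.
\end{itemize}
Each case gives $W_1<1-\alpha$, hence $\kappa^{\mathrm{LLY}}>0$.

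\textbf{Main obstacle.} The main obstacle is designing the transport plan in the upper bound so that its cost matches the threshold exactly, in particular handling the mismatched masses $1/(d_x+1)$ versus $d_x/((d_x+1)d_y)$. Making each strict-improvement rerouting feasible also requires care, since there must be enough remaining mass in $B$ to absorb the rerouted flow. We would first treat the extremal configuration, where conditions (2) and (3) hold, and then perturb from it.
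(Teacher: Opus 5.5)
Your architecture matches the paper's: an explicit plan for the upper bound, a four-valued potential on $A$, $\{x\}\cup\Delta$, $y$, $B$ as a Kantorovich--Rubinstein certificate, and rerouting for strictness. However, the normalization at its center is wrong, and as written the key steps fail.

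\textbf{The target inequality.} Since $\operatorname{dist}_G(x,y)=1$, we have $\kappa^\alpha(x,y)=1-W_1(m_x^\alpha,m_y^\alpha)$, and dividing by $1-\alpha>0$ does not change the sign. So $\kappa^{\mathrm{LLY}}(x,y)\ge 0$ is equivalent to $W_1\le 1$. Your target $W_1\le 1-\alpha$ is equivalent to $\kappa^\alpha\ge\alpha$, i.e.\ $\kappa^{\mathrm{LLY}}\ge 1/d_x$, which is false in the extremal configuration. Concretely, your potential $f$ gives
\[
\sum_{v} f(v)\bigl(m_y^\alpha(v)-m_x^\alpha(v)\bigr)=\frac{1}{d_x+1}\Bigl[3d_x-(\delta+1)-2(\delta+1)\frac{d_x}{d_y}\Bigr].
\]
When $\delta(d_y+2d_x)=2d_xd_y-2d_x-2d_y$, this equals $1$, not $d_x/(d_x+1)$. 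Likewise, $W_1\le d_x/(d_x+1)$ reduces to $\delta(d_y+2d_x)\ge 2d_xd_y-2d_x-d_y$, a different threshold.

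\textbf{The distance-$1$ step.} The surplus $\frac{1}{d_x+1}\bigl(1-\frac{d_x}{d_y}\bigr)$ at $x$ can never reach $B$ at distance $1$, because $B\cap N_G[x]=\emptyset$ by definition. At $\alpha=1/(d_x+1)$ the vertex $y$ is balanced, so this mass costs $2$. The same holds for the surplus at $\Delta$ when $E(\Delta,B)=\emptyset$. The correct bound is
\[
W_1(m_x^\alpha,m_y^\alpha)\le\frac{1}{d_x+1}\Bigl[3(d_x-1-\delta)+2(\delta+1)\Bigl(1-\frac{d_x}{d_y}\Bigr)\Bigr].
\]
This coincides with the certificate value above and is $\le 1$ exactly when $\delta(d_y+2d_x)\ge 2d_xd_y-2d_x-2d_y$, so no ``savings'' bookkeeping is needed. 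In the ``only if'' direction, your reroutings should likewise conclude $W_1<1$, not $W_1<1-\alpha$.

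With these corrections your argument is the paper's proof specialized to $\alpha=1/(d_x+1)$. The paper's plan sends $\alpha-\frac{1-\alpha}{d_x}$ from $x$ to $y$, which vanishes at your $\alpha$, and its certificate is $2-f$. The paper does not need the linearity of the idleness function that you invoke. Its plan and certificate are valid uniformly for all $\alpha\ge 1/(d_x+1)$, so it simply lets $\alpha\to 1$; the external input buys you nothing.

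Two smaller points. First, your $f$ is defined only on $N_G[x]\cup N_G[y]$, so extend it by McShane before applying duality. Second, your feasibility worry in the rerouting step disappears. The bound (cost $3$ for any $A$--$B$ pair, cost $2$ for any $(\{x\}\cup\Delta)$--$B$ pair) holds for every assignment of sources to $B$. So you may let $z\in\Delta$ serve a neighbor $b\in B$ first, or $u\in A$ serve some $w\in B$ with $\operatorname{dist}_G(u,w)\le 2$. Either saves a fixed positive amount.
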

\medskip
Moreover, in Theorem \ref{thm:CurvatureRange}, we show that previous results on the upper and lower bounds of the Lin–Lu–Yau Ricci curvature, including \cite{CKKLMP20} and \cite{LLY14}, extend to the $\alpha$-Ricci curvature for arbitrary $\alpha$. This theorem not only recovers the known bounds for the Lin–Lu–Yau Ricci curvature, but also generalizes them to the $\alpha$-Ricci curvature on arbitrary graphs without additional assumptions. 

We also study $\alpha$-Ricci curvatures of graph joins. In \cite{LLY2011,CKKLP20}, curvature properties of natural graph products, such as tensor, Cartesian, and strong products, are investigated, and conditions for positivity are established. These products preserve regularity, whereas graph joins typically produce non-regular graphs. In Proposition \ref{prop:graphjoins}, we analyze the $\alpha$-Ricci and Lin–Lu–Yau Ricci curvatures of graph joins.

The paper is organized as follows. Section \ref{sec:prelim} reviews preliminaries on graph theory and $\alpha$-Ricci and Lin--Lu--Yau Ricci curvature. Section \ref{sec:regular} introduces Algorithm \ref{alg:regular} and proves Theorem \ref{thm:main}. Section \ref{sec:algorithm} presents Algorithm \ref{alg:regular} for transport plans between probability distributions with finitely supported difference. Section \ref{sec:range} proves Theorem \ref{thm:LLY_nonnegative} and establishes upper and lower bounds for the $\alpha$-Ricci curvature. Section \ref{sec:join} determines the exact value and a lower bound for the $\alpha$-Ricci curvature of graph joins.

\section{Preliminaries}\label{sec:prelim}

In this section, we introduce the notation and definitions used throughout the paper. 

\subsection{Graph Theory}

In this subsection, we recall some notions from graph theory.
For further background, we refer the reader to \cite{diestel2012graph}.

A graph $G=(V,E)$ is called simple if it is undirected and has neither loop nor multiple edge.
We write $V(G):=V$ and $E(G):=E$.
For $x \in V(G)$, the \textit{open neighborhood of $x$ in $G$} is defined by $N_G(x):=\{y\in V \mid \{ x, y \} \in E(G)\}$,
and the \textit{closed neighborhood of $x$ in $G$} is $N_G[x]:=N_G(x)\cup \{x \}$. 
The \textit{degree} $\deg_G(x)$  of $x$ is defined as the size $|N_G(x)|$ of $N_G(x)$.
We denote the distance between vertices $x, y \in V$ by $\operatorname{dist}_G(x,y)$. 

For a partition $V(G)=U_1\sqcup \cdots \sqcup  U_\ell$, we denote by $K_{U_1,\cdots, U_\ell}$ the complete $\ell$-partite graph with partite sets $U_1, \cdots, U_\ell$. 
We simply call a complete 2-partite graph a complete bipartite graph. 
When the partition of the vertex set is clear from the context, we simply write $K$.

A \textit{matching} $M$ of $G$ is a subgraph of $G$ whose edge set has the property that no two edges share a common vertex, and whose vertex set consists precisely of the endpoints of the edges of $M$.
A matching $M$ is called \textit{maximum} if $|E(M)|$ is {maximum} among all matchings in $G$. 
A matching $M$ is called \emph{perfect} if every vertex of $G$ is an endpoint of exactly one edge in $M$. 

In what follows, all graphs are assumed to be simple and connected.

\subsection{$\alpha$-Ricci and Lin--Lu--Yau Ricci curvature}
In this subsection, we introduce the $\alpha$-Ricci and Lin-Lu-Yau Ricci curvature on a graph.
Let $G=(V,E)$ be a graph. A \textit{probability distribution} is a mapping $\tau: V \rightarrow [0,1]$ such that $\sum_{x \in V} \tau(x) =1$. Suppose that two probability distributions  $\mu$ and $\nu$ on  $V$ have finite support.
A \textit{transport plan} $\pi$ transporting $\mu$ to $\nu$ is a function $\pi: V \times V \rightarrow [0,1]$ satisfying 
$$
\sum_{y \in V} \pi (x,y) = \mu(x), \quad \sum_{x \in V} \pi(x,y) = \nu(y).
$$
The 1-\textit{Wasserstein distance} between $\mu$ and $\nu$ is defined by
$$
W_1(\mu, \nu) =  \inf_{\pi \in \Pi(\mu,\nu)} \sum_{x ,y \in V} \pi(x,y) \cdot \text{dist}_{G}(x,y)
$$
where $\Pi(\mu, \nu)$ is the set of all transport plans transporting $\mu$ to $\nu$. We say that $f:V \rightarrow \mathbb{R}$ is \textit{$M$-Lipschitz} if 
\begin{equation*}
|f(x)-f(y)| \leq M \cdot \text{dist}_{G}(x,y) 
\end{equation*} 
for any $x,y \in V$. 

The Kantorovich--Rubinstein duality provides a convenient way to obtain lower bounds for $W_1$ using $1$-Lipschitz functions on $V$. 

\begin{theorem}[Kantorovich--Rubinstein Duality Theorem]
\label{thm:KR_duality}
Let $G=(V,E)$ be a graph, and $\mu, \nu$ be two probability distributions {with finite support} on $V$.
Then,
$$
W_1(\mu, \nu) =  \sup_{f \in \textup{Lip}_1(V)} \sum_{v \in V} f(v) \cdot \big( \mu(v) - \nu(v)\big) 
$$
where $\textup{Lip}_1(V)$ is the set of all 1-Lipschitz functions on $V$.
\end{theorem}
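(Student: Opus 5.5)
The plan is to reduce the statement to finite-dimensional linear programming duality, and then to pass from a dual optimal pair to a single $1$-Lipschitz function on all of $V$. Put $S := \operatorname{supp}\mu \cup \operatorname{supp}\nu$, a finite set, and write $d := \operatorname{dist}_G$. Any transport plan $\pi \in \Pi(\mu,\nu)$ vanishes off $S\times S$, because its marginals vanish off $S$ and $\pi \ge 0$. So $W_1(\mu,\nu)$ is the value of the finite linear program
\[
\min \sum_{x,y\in S} \pi(x,y)\, d(x,y) \quad \text{s.t.}\quad \pi\ge 0,\ \ \sum_{y}\pi(x,y)=\mu(x),\ \ \sum_x \pi(x,y)=\nu(y).
\]
Its feasible set is a nonempty compact polytope, since it contains the product plan $\mu\otimes\nu$. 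Hence the infimum is a minimum and is finite.

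\textbf{Weak duality.} First I prove the inequality $\ge$. Take any $f\in\mathrm{Lip}_1(V)$ and any plan $\pi$. Then
\[
\sum_v f(v)\bigl(\mu(v)-\nu(v)\bigr)=\sum_{x,y}\pi(x,y)\bigl(f(x)-f(y)\bigr)\le \sum_{x,y}\pi(x,y)\,d(x,y).
\]
Taking the supremum over $f$ and the infimum over $\pi$ gives $\sup \le W_1$.

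\textbf{Strong duality.} The LP dual of the program above is to maximize $\sum_x \phi(x)\mu(x) - \sum_y \psi(y)\nu(y)$ over $\phi,\psi:S\to\mathbb{R}$ with $\phi(x)-\psi(y)\le d(x,y)$ for all $x,y\in S$. Finite LP strong duality applies because the primal is feasible and bounded, so there is an optimal dual pair $(\phi,\psi)$ whose value equals $W_1(\mu,\nu)$.

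\textbf{Main step: obtaining a single $1$-Lipschitz function.} I then replace the pair $(\phi,\psi)$ by one function, which I expect to be the only real obstacle. Define $f(y) := \max_{x\in S}\bigl(\phi(x)-d(x,y)\bigr)$ for $y \in S$. The dual constraint gives $f \le \psi$ on $S$. Taking $x=y$ in the maximum gives $f\ge \phi$ on $S$. The function $f$ is $1$-Lipschitz on $S$, being a maximum of $1$-Lipschitz functions of $y$ by the triangle inequality. Since $\mu,\nu\ge 0$, it follows that
\[
\sum f\mu-\sum f\nu \;\ge\; \sum\phi\mu-\sum\psi\nu \;=\; W_1(\mu,\nu).
\]

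\textbf{Extension to $V$.} Finally, extend $f$ from $S$ to all of $V$ by the McShane formula $\tilde f(v):=\min_{s\in S}\bigl(f(s)+d(s,v)\bigr)$. This is well defined because $S$ is finite and $G$ is connected. It is $1$-Lipschitz on $V$ and agrees with $f$ on $S$, since $f$ is already $1$-Lipschitz there. As $\mu-\nu$ vanishes off $S$, the function $\tilde f$ attains $W_1(\mu,\nu)$ in the supremum. Combined with weak duality, this gives equality, and in fact the supremum is a maximum.
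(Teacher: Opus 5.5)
Your proof is correct, but it takes a different route from the paper, which gives no argument and cites \cite[Chapter 6]{villani}, i.e.\ the general Kantorovich duality on Polish spaces specialized to the cost $c=\operatorname{dist}_G$. You instead use the finiteness of $S=\operatorname{supp}\mu\cup\operatorname{supp}\nu$: the primal becomes a finite linear program, and strong LP duality replaces the infinite-dimensional duality argument. Your replacement of the dual pair $(\phi,\psi)$ by the single function $f(y)=\max_{x\in S}(\phi(x)-\operatorname{dist}_G(x,y))$ is the finite version of the $c$-transform step; in the general theory that step shows the dual potentials for $c=\operatorname{dist}_G$ can be taken to be one $1$-Lipschitz function. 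Every step checks out: plans live on $S\times S$, $\mu\otimes\nu$ is feasible, $\phi\le f\le\psi$ on $S$, and the paper's standing connectedness assumption keeps all distances finite. Your route gives a self-contained, elementary proof that also shows both the infimum and the supremum are attained. It fits the tools the paper uses later: LP duality in the remark after Lemma~\ref{lem:acyclic} and the McShane extension in Theorem~\ref{prop:TransportationPlan}. Your McShane step is in fact optional, since the same max-formula already defines a $1$-Lipschitz function on all of $V$. What the citation buys is generality. In Theorem~\ref{prop:TransportationPlan} the paper applies the duality when only $\mu-\nu$ is assumed finitely supported, which the statement you proved does not literally cover. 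Covering it would take an extra reduction: leave the common mass $\min\{\mu,\nu\}$ in place, apply your finite argument to the excess measures $(\mu-\nu)^{\pm}$, and truncate $f$ to justify weak duality.
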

\medskip
For a proof of Theorem \ref{thm:KR_duality}, we refer the reader to \cite[Chapter 6]{villani}. Now, we define the probability distribution $m^\alpha_x$ for $x \in V$ and $\alpha \in [0,1]$ by

\begin{equation}\label{mass funcion for alpha-Ric}
m_x^{\alpha}(v) = 
\begin{cases}
\alpha & \text{if } v = x, \\
\displaystyle\frac{1 - \alpha}{\deg_G(x)} & \text{if } v \in N_G(x), \\
0 & \text{if} \; v \in V \setminus N_G[x].
\end{cases}
\end{equation} 
In \cite{Oli}, Ollivier introduced the concept of $\alpha$-Ricci curvature. 

\begin{definition}[$\alpha$-Ricci curvature] Let $G=(V,E)$ be a locally finite graph. {For two vertices  $x,y \in V,$} $\alpha$-Ricci curvature $\kappa^{\alpha}(x,y)$ is defined by 
$$
\kappa^{\alpha} (x,y) = 1- \frac{W_1(m_x^{\alpha}, m_y^{\alpha})}{\text{dist}_G (x,y)}.
$$ 
\end{definition}

In \cite{LLY2011}, Lin, Lu, and Yau proved that for any $\alpha \in [0,1]$, the function { $\kappa^{\alpha}(x,y)$ is concave in $\alpha$ and that $\frac{\kappa^{\alpha}(x,y)}{1-\alpha}$}
is uniformly upper bounded in $\alpha$. Motivated by these facts, they introduce the Lin--Lu-Yau Ricci curvature. 

\begin{definition}[Lin--Lu--Yau Ricci curvature]\label{LLY Ricci}

Let $G=(V,E)$ be a locally finite graph. The \textit{Lin-Lu-Yau Ricci curvature} $\kappa^{\text{LLY}}(x,y)$ of {two vertices $x, y \in V$} is defined by 
\begin{equation}
\begin{aligned}
\kappa^{\text{LLY}}(x,y)= \lim_{\alpha \rightarrow 1^-} \frac{\kappa^\alpha (x,y)}{1-\alpha}. 
\end{aligned}
\end{equation}
\end{definition}

\section{Alpha-Ricci Curvature of Regular Graphs}\label{sec:regular}

In this section, we investigate the $\alpha$-Ricci curvatures of $d$-regular graphs. 
We show that the curvature is determined by the size of the maximum matching in the local neighborhood. 

Let $G$ be a $d$-regular graph. For an edge $\{x,y \} \in E(G)$, let $A := N_G(x) \setminus N_G[y]$ and $B := N_G(y) \setminus N_G[x]$.
Let $K$ %$K_{A,B}$ 
be the complete bipartite graph whose two partite sets are $A$ and $B$.  
Note that for any $u \in A$ and $v \in B$, the distance $\operatorname{dist}_G(u, v)$ is at most $3$.
For any $i=1,2,3$, let $E_i = \{ \{u,v \} \ | \ u \in A, v \in B, \operatorname{dist}_G(u, v) = i \}$ and let  $H_i$ be the subgraph of $K$ induced by $E_i$. Let $H$ be the subgraph of $K$ induced by $E_1 \cup E_2$. 

The following is an algorithm to find some transportation plan from $m^\alpha_x$ to $m^\alpha_y$, which will be shown to be optimal. Note that the algorithm also works well when degrees of $x$ and $y$ are equal, even if $G$ is not regular.

\begin{algorithm}[{Transportation plan from $m^\alpha_x$ to $m^\alpha_y$}
]\label{alg:regular}
\phantom{}
\begin{enumerate}
    \item[1.] Find a maximum matching $M_1$ of $H_1$. 
    Let $\mathcal{M}$ be the set of all maximum matchings of $H_1$.
    \item[2.] For $M_1 \in \mathcal{M}$, find a maximum matching $M_2$ of $H -V(M_1)$.
    \item[3.] Choose $M_1 \in \mathcal{M}$ such that $2|E(M_1)| + |E(M_2)|$ is maximized. 
    Let $M$ be the union of such $M_1$ and $M_2$.
    \item[4.] Find a perfect matching $M_3$ in $K-V(M)$. 
    \item[5.]
\begin{enumerate}
    \item[(1)] If $\alpha \ge \frac{1}{d+1}$, then we transport $\alpha - \frac{1-\alpha}{d}$ from $x$ to $y$ and for any edge $\{u, v \} \in E(M \cup M_3)$ with $u \in A$ and $v \in B$, transport $\frac{1-\alpha}{d}$ from $u$ to $v$.   
    \item[(2)] If $\alpha < \frac{1}{d+1}$, then choose an edge $\{u_1, v_1 \}$ in $M \cup M_3$ such that 
    \begin{equation*}
        \operatorname{dist}_G(u_1, v_1) =\max_{\{u,v\}\in E(M \cup M_3)}\operatorname{dist}_G(u,v).
    \end{equation*}
    Let $u_1 \in A$ and $v_1 \in B$. 
    Transport $\frac{1-\alpha}{d} - \alpha$ from $u_1$ to $x$; and from $y$ to $v_1$. Transport $\alpha$ from $u_1$ to $v_1$ and for any edge $\{u, v \} \in E(M \cup M_3)$ with $u \in A \setminus \{ u_1 \}$ and $v \in B \setminus \{v_1 \}$,  transport $\frac{1-\alpha}{d}$ from $u$ to $v$.
\end{enumerate}
\end{enumerate}
\end{algorithm}

\begin{remark}\label{algorithm T for lower}
Note that for $0 \le \alpha < \frac{1}{d +1}$,  $\alpha- \frac{1-\alpha}{d}<0$.
However, the optimal transportation plan for this case also depends on the matchings $M_1$ and $M_2$ obtained in Step 1, 2, and 3. Thus, the sizes $|E(M_1)|$ and $|E(M_2)|$ determine $W_1(m_x^\alpha, m_y^\alpha)$ for all $\alpha \in [0,1]$. We calculate $W_1(m_x^\alpha, m_y^\alpha)$ for all $\alpha \in [0,1]$ in the proof of Theorem \ref{thm:regularalg} 
\end{remark}

Let the transportation plan obtained by Algorithm \ref{alg:regular} 
be denoted by $\pi_T$.   
 For a transportation plan $\pi$ from $m^\alpha_x$ to $m^\alpha_y$, let 
$$ E_{\pi,i} = \{ \{u,v \} \ | \ \pi(u,v) >0 , \ u \in A, v \in B, \ \operatorname{dist}_G(u, v) = i \} $$
for any $i=1,2,3$.  Let $H_{\pi}$ be the subgraph of $K$ induced by $E_{\pi,1} \cup E_{\pi,2}$. 
The following theorem shows that $\pi_T$ is an optimal transportation plan  from $m^\alpha_x$ to $m^\alpha_y$.

\begin{theorem}\label{thm:regularalg}
The transportation plan $\pi_T$ obtained by Algorithm \ref{alg:regular} is optimal.
\end{theorem}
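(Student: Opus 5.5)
The plan is to prove optimality by matching the cost of $\pi_T$ against a lower bound from Theorem~\ref{thm:KR_duality}. So I want to exhibit a $1$-Lipschitz function $f$ with $\sum_v f(v)\bigl(m_x^\alpha(v)-m_y^\alpha(v)\bigr)$ equal to the cost of $\pi_T$.

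\textbf{Cost of $\pi_T$.} Write $a=|A|=|B|$, which holds because $\deg x=\deg y$. Then $|E(M_3)|=a-|E(M_1)|-|E(M_2)|$. For $\alpha\ge \frac1{d+1}$ the cost is
\[
\Bigl(\alpha-\tfrac{1-\alpha}{d}\Bigr)+\tfrac{1-\alpha}{d}\bigl(3a-2|E(M_1)|-|E(M_2)|\bigr).
\]
For $\alpha<\frac1{d+1}$ the edge $\{u_1,v_1\}$, of length $t$, is rerouted through $x$ and $y$. This gives cost
\[
2\Bigl(\tfrac{1-\alpha}{d}-\alpha\Bigr)+\alpha t+\tfrac{1-\alpha}{d}\bigl(3a-2|E(M_1)|-|E(M_2)|-t\bigr).
\]
One should also check that mass on $\Delta=N(x)\cap N(y)$ stays in place, which is legitimate because $m_x^\alpha$ and $m_y^\alpha$ agree on $\Delta$.

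\textbf{Step 1: $M$ is a maximum-weight matching.} Give each edge $\{u,v\}$ of $K$ the weight $w(u,v)=3-\operatorname{dist}_G(u,v)\in\{0,1,2\}$. I first show that $2|E(M_1)|+|E(M_2)|$ equals the maximum weight of a matching of $K$.

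Take a maximum-weight matching $N$ and let $N_1$ be its set of weight-$2$ edges. If $N_1$ is not a maximum matching of $H_1$, there is an $N_1$-augmenting path $P$ in $H_1$. Replacing $N_1$ by $N_1\triangle P$ raises the number of weight-$2$ edges by one. Only the two endpoints of $P$ become newly covered, so at most two edges of $N$ of weight $\le 1$ must be deleted. The weight therefore does not drop.

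Iterating, some maximum-weight matching has its weight-$2$ part in $\mathcal M$. Step~3 of Algorithm~\ref{alg:regular} then maximizes over exactly these matchings, so $M$ attains the maximum weight. Step~4 is possible because $K$ is complete bipartite with equal sides.

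\textbf{Step 2: the dual potential.} By the Egerv\'ary (weighted K\"onig) theorem for bipartite graphs with integer weights, there are integer potentials $p\colon A\cup B\to\mathbb Z_{\ge0}$ with
\[
p(u)+p(v)\ge w(u,v)\ \text{for all } u\in A,\ v\in B,\qquad \sum p = 2|E(M_1)|+|E(M_2)|.
\]
Complementary slackness gives equality on the edges of $M$ and $p=0$ on any unmatched vertex. One can also arrange $p\le 2$.

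Define $f(u)=3-p(u)$ for $u\in A$ and $f(v)=p(v)$ for $v\in B$. Then
\[
f(u)-f(v)=3-p(u)-p(v)\le \operatorname{dist}_G(u,v).
\]
The reverse inequality $f(v)-f(u)\le\operatorname{dist}_G(u,v)$ needs $p(u)+p(v)\le 3+\operatorname{dist}_G(u,v)$, which follows from $p\le2$ and $\operatorname{dist}_G\ge1$ for $u\neq v$.

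\textbf{Step 3: extension and evaluation.} For $\alpha\ge\frac1{d+1}$ set $f(x)=2$, $f(y)=1$ and $f\equiv1$ on $\Delta$. Each value on $A$ lies in $[1,3]$ and each value on $B$ lies in $[0,2]$, so every adjacency constraint among $x$, $y$, $\Delta$, $A$, $B$ holds. Distance-$2$ constraints hold because all values lie in $[0,3]$ and the pairs involved are at least two apart. A McShane extension then gives a $1$-Lipschitz function on all of $V$. Evaluating,
\[
\sum_v f\,(m_x^\alpha-m_y^\alpha)=\alpha(2-1)+\tfrac{1-\alpha}{d}\bigl(-1+3a-\textstyle\sum p\bigr),
\]
where the $-1$ comes from $y\in N(x)$ and $x\in N(y)$. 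This equals the cost of $\pi_T$ above.

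For $\alpha<\frac1{d+1}$ the mass at $x$ now acts as a sink rather than a source. I would instead take $f(x)$ and $f(y)$ shifted so that $f(x)-f(y)$ is $-1$ or $1$ as required, adjusting the potentials to reflect $t$.

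\textbf{Main obstacle.} I expect the small-$\alpha$ case to be the hardest part. The potential must certify the extra term $\alpha(3-t)$, so one must show that a potential can be chosen with some edge of $M$ at maximal length $t$ being tight against $x$ and $y$. I would try a case split on $t\in\{1,2,3\}$, choosing $p$ with $p(u_1)=p(v_1)$ minimal.
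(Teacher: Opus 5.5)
Your dual-certificate route (Egerv\'ary potentials plus Kantorovich--Rubinstein) is genuinely different from the paper's. For $\alpha\ge\frac1{d+1}$ the paper cites Hehl's lemma and runs a primal augmenting-path exchange on an optimal plan; for $\alpha<\frac1{d+1}$ it argues directly on plans. Your Step~1 and the choice of $p$ are fine, and they avoid Hehl's lemma.

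The Lipschitz check in your Step~3 is wrong as written, in two places. First, setting $f\equiv 1$ on $\Delta$ fails as soon as some $u\in V(M_3)$ is adjacent to a vertex of $\Delta$: complementary slackness forces $p(u)=0$, so $f(u)=3$. The fix is to leave $f$ undefined on $\Delta$, where $m_x^\alpha-m_y^\alpha$ vanishes, and extend from $A\cup B\cup\{x,y\}$. Second, the ranges $[1,3]$ and $[0,2]$ do not handle adjacent $u,u'\in A$ (or $v,v'\in B$), which need $|p(u)-p(u')|\le 1$. This does hold, but only via complementary slackness. If $p(u')=2$, its partner $v'$ in the perfect matching $M\cup M_3$ has $p(v')=0$ and $\operatorname{dist}_G(u',v')=1$. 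Then $\operatorname{dist}_G(u,v')\le 2$ forces $p(u)\ge 1$, and the same argument works inside $B$.

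The genuine gap is the case $\alpha<\frac1{d+1}$. It is part of the statement, and you only sketch it. Put $m=\frac{1-\alpha}{d}$, $\varepsilon=m-\alpha$ and $C=3a-2|E(M_1)|-|E(M_2)|$. A function of your shape has value $mC+\varepsilon\bigl(f(y)-f(x)\bigr)$, while $\pi_T$ costs $mC+(2-t)\varepsilon$, so you need $f(y)-f(x)=2-t$. This is the same split on $t$ as the paper's Cases 2--1 to 2--3.

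For $t=3$, your large-$\alpha$ function with $f(x)=2$, $f(y)=1$ already works. For $t=1$, take $f=1$ on $A\cup\{y\}$ and $f=0$ on $B\cup\{x\}$.

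For $t=2$ you need $f(x)=f(y)=c$, so all values on $A\cup B$ must lie in $[c-1,c+1]$. An arbitrary optimal $p$ can violate this: nothing prevents $p(u)=0$ and $p(v')=0$ for some $u\in A$, $v'\in B$ at distance $3$, giving $f(u)=3$ and $f(v')=0$. The missing idea is to use the specific optimal potential coming from a minimum vertex cover $Q$ of $H_1$, via K\"onig (this amounts to capping distances at $2$). Set
\begin{itemize}
\item $f(u)=1$ for $u\in A\cap Q$ and $f(u)=2$ for $u\in A\setminus Q$;
\item $f(v)=1$ for $v\in B\cap Q$ and $f(v)=0$ for $v\in B\setminus Q$;
\item $f(x)=f(y)=1$.
\end{itemize}
This is $1$-Lipschitz on the support. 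Its value is $m(2a-|E(M_1)|)$, which equals $mC$ because $M_3=\emptyset$ when $t=2$. It is the dual counterpart of the paper's auxiliary vertex $w$ in Case~2--2. Your suggestion of tuning $p(u_1)$ and $p(v_1)$ does not address this.
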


\begin{proof}
For the first case, assume that $\alpha \ge \frac{1}{d+1}$. Now one can assume that any optimal transportation plan  transports a mass of $\alpha-\frac{1-\alpha}{d}$ from $x$ to $y$ by \cite[Lemma 3.2]{Hehl26}. 
Suppose that $\pi_T$ is not optimal. 
Let $\pi$ be an optimal transportation plan such that the sum $\sum_{e \in E(M_1)} \pi(e)$ is maximized. 
Here, by a slight abuse of notation, we write $\pi(e) := \pi(u,v)$ for an edge $e = \{u,v\}$ with $u\in A$ and $v\in B$. 
Let $H_{M,\pi}$ be the subgraph of $K$ induced by $E(M) \cup E_{\pi,1} \cup E_{\pi,2}$. Since $\pi$ is optimal and $\pi_T$ is not optimal, there exists a connected component $S$ of $H_{M,\pi}$ such that
$$2\sum_{e \in E(S) \cap E_{\pi,1}} \pi(e) + \sum_{e \in E(S) \cap E_{\pi,2}} \pi(e) > \dfrac{1 - \alpha}{d}\left( 2|E(S) \cap E(M_1)| + |E(S) \cap E(M_2)|\right).$$

Since $M_1$ is a maximum matching of $H_1$, the matching $E(S)  \cap E(M_1)$ is a maximum matching of the subgraph of $S$ induced by $E(S) \cap E_{\pi,1}$. Since the number of edges in maximum matching equals to the order of the minimum covering in a bipartite graph, 
\begin{equation*}
    \sum_{e \in E(S) \cap E_{\pi,1}} \pi(e)  \le \dfrac{1 - \alpha}{d}|E(S) \cap E(M_1)|.
\end{equation*}

\textbf{Case 1--1: $A \cap V(S) = A \cap V(S)\cap V(M)$ or $B \cap V(S) = B \cap V(S) \cap  V(M)$.}

Without loss of generality, assume that $A \cap V(S) = A \cap V(S) \cap V(M)$. 
 Since $E(S) \cap E(M)$ is a maximum matching of $S$, we have 
$$\sum_{e \in E(S) \cap (E_{\pi,1}\cup E_{\pi,2})} \pi(e)  \le \dfrac{1 - \alpha}{d}|E(S) \cap E(M)| =\dfrac{1 - \alpha}{d} |A \cap V(S) |.$$
 This implies that 
$$2\sum_{e \in E(S) \cap E_{\pi,1}} \pi(e) + \sum_{e \in E(S) \cap E_{\pi,2}} \pi(e)  \le\dfrac{1 - \alpha}{d} \left( 2|E(S) \cap E(M_1)| + |E(S) \cap E(M_2)|\right),$$
which is a contradiction.

\medskip

\textbf{Case 1--2: $A \cap V(S) \neq A \cap V(S) \cap  V(M)$ and $B \cap V(S) \neq B \cap V(S) \cap V(M)$.}

Suppose that there is no $M$-augmenting path in $S$. 
Then $E(S) \cap E(M)$ is a maximum matching of $S$, and hence there is a covering set $C$ of $S$ such that $|C|=|E(S) \cap E(M)|$.
This implies that  $$2\sum_{e \in E(S) \cap E_{\pi,1}} \pi(e) + \sum_{e \in E(S) \cap E_{\pi,2}} \pi(e)  \le\dfrac{1 - \alpha}{d} \left( 2|E(S) \cap E(M_1)| + |E(S) \cap E(M_2)|\right),$$
which is a contradiction.
So there is an $M$-augmenting path in $S$. Let $Q:u_0, v_1, u_1, \cdots, u_j,v_{j+1}$ be a shortest $M$-augmenting path in $S$. Let $a =|(E(Q)-E(M))\cap E_1|$ and  $b =|(E(Q) \cap E(M))\cap E_1|$. 

If $a \ge b$, then for any $i=1,2$, let $\widetilde{M_i}$ be a matching composed of edges in $(E(M_i) -E(Q)) \cup \left((E(Q)-E(M))\cap E_i \right)$ and  $\widetilde{M} = M \vartriangle Q$.  
Now $\widetilde{M_1}$ and 
$\widetilde{M_2}$ satisfy the conditions 1 and 2 in Algorithm  \ref{alg:regular} and  $2|E(\widetilde{M_1})| + |E(\widetilde{M_2})| > 2|E(M_1)| + |E(M_2)|$, which contradicts the choice of $M$. So we have $a \le b-1$.

Let $\beta = \min \{ \pi(u_i, v_{i+1})) \ | \ i=0,1,\ldots, j \}$. Note that $0 < \beta < \dfrac{1 - \alpha}{d}$. Let $\widetilde{\pi}$ be a transportation plan from $m^\alpha_x$ to $m^\alpha_y$ defined by
\[
\widetilde{\pi}(u,v) =
\begin{cases}
    \pi(u,v) & \text{if } \{u,v \} \notin E(Q) \; \mbox{and} \; \{u,v\} \neq \{u_0, v_{j+1} \} , \\[4pt]
    \pi(u,v) - \beta & \text{if } \{u,v \}  \in E(Q) -E(M), \\[4pt]
    \pi(u,v) + \beta & \text{if } \{u,v \} \in E(Q) \cap E(M), \\[4pt]
    \pi(u,v) + \beta & \text{if } \{u,v \} = \{u_0, v_{j+1} \}.
\end{cases}
\]
Now the total cost of $\widetilde{\pi}$ minus the total cost of $\pi$ is at most
\[ 
\beta (b-a) +2(a-b-1) +3 = \beta (a-b+1) \le 0. 
\]
This implies that  $\widetilde{\pi}$ is also an optimal transportation plan and 
\[
\sum_{e \in E(M_1)} \widetilde{\pi}(e) > \sum_{e \in E(M_1)}  \pi(e), 
\]
which contradicts the choice of $\pi$. Therefore, $\pi_T$ is optimal. 

 Now we consider the case where $\alpha < \frac{1}{d+1}$.
 Note that in this case $x$ should receive $\frac{1-\alpha}{d} -\alpha$ and $y$ should send $\frac{1-\alpha}{d} -\alpha$. 
 Let $\pi'$ be an optimal transportation plan from $m^\alpha_x$ to $m^\alpha_y$ in this case. Suppose that $x$ receive $\gamma$ from $A$ and $\frac{1-\alpha}{d} -\alpha - \gamma$ from $y$ under the plan $\pi'$. This implies that $y$ send $\gamma$ to $B$. 
 Note that if $\gamma$ is $0$, then one can show that
 $$ \sum_{i=1}^{3} \sum_{e \in E(H_i)} i \pi'(e) = \sum_{i=1}^{3} \frac{1-\alpha}{d} \times i|M_i|$$
by a similar way with Case 1--1 and Case 1--2.

\medskip
\textbf{Case 2--1: $M_3 \neq \emptyset$.}

Let $\sum_{e \in E(H)} \pi'(e) = \frac{1-\alpha}{d} |M| - \gamma'$.  Then $\sum_{e \in E(H_3)} \pi'(e) = \frac{1-\alpha}{d} |M_3| - \gamma + \gamma'.$ This implies that 
 $$ \sum_{i=1}^{3} \sum_{e \in E(H_i)} i \pi'(e) \ge \sum_{i=1}^{3} \frac{1-\alpha}{d} \times i|M_i| - 3 \gamma$$
 and 
 \begin{eqnarray*} W_1(m_x^\alpha, m_y^\alpha)  &\ge & 2\gamma +\frac{1-\alpha}{d} -\alpha - \gamma +  \frac{1-\alpha}{d} \left( 3|A| - 2|M_{1}| - |M_{2}| \right) -3\gamma   \\ 
&=& -2 \gamma  -\alpha +  \frac{1-\alpha}{d} \left(1+ 3|A| - 2|M_{1}| - |M_{2}| \right).
\end{eqnarray*}
Since $0 \le \gamma \le \frac{1-\alpha}{d} -\alpha$, we have $$ W_1(m_x^\alpha, m_y^\alpha)  \ge  -2 \left( \frac{1-\alpha}{d} -\alpha \right)  -\alpha +  \frac{1-\alpha}{d} \left(1+ 3|A| - 2|M_{1}| - |M_{2}| \right).$$
Since $\pi_T$ obtained by Algorithm \ref{alg:regular} gives the lower bound of  $W_1(m_x^\alpha, m_y^\alpha)$,  $\pi_T$ is optimal.

\medskip

\textbf{Case 2--2: $M_2 \neq \emptyset$ and $M_3 = \emptyset$.}

Let $\tilde{G}$ be a graph obtained from $G$ by adding a new vertex $w$ and all edges between $w$ and vertices in $A \cup B$. Let  the 1-Wasserstein distance in $\tilde{G}$ between $m_x^\alpha$ and $ m_y^\alpha$ be denoted by  $W'_1(m_x^\alpha, m_y^\alpha)$.
Note that $W_1(m_x^\alpha, m_y^\alpha) \ge W'_1(m_x^\alpha, m_y^\alpha)$ and for any $u \in A$ and $v \in B$ such that $\{u,v \} \notin E(H_1)$, $\operatorname{dist}_{\tilde{G}}(u, v)=2$.

Now one can assume that  $M_1$ and $M_2$ obtained by applying  Algorithm \ref{alg:regular} with $\tilde{G}$ and $G$ are the same  because $$\{ (u, v) \ | \ u \in A, v \in B, \ \operatorname{dist}_{\tilde{G}}(u, v)=1 \} = \{ (u, v) \ | \ u \in A, v \in B, \ \operatorname{dist}_G(u, v)=1 \}$$ and $M_3 = \emptyset$.

By a similar way with Case 2--1, one can show that 
\begin{eqnarray*} W_1(m_x^\alpha, m_y^\alpha)  \ge  W'_1(m_x^\alpha, m_y^\alpha)  &\ge& - \left( \frac{1-\alpha}{d} -\alpha \right)  -\alpha +  \frac{1-\alpha}{d} \left(1+ 3|A| - 2|M_{1}| - |M_{2}| \right)\\
&=& - \left( \frac{1-\alpha}{d} -\alpha \right)  -\alpha +  \frac{1-\alpha}{d} \left(1+ |M_{1}| +2 |M_{2}| \right). \end{eqnarray*}

Since $\pi_T$ obtained by Algorithm \ref{alg:regular} gives the lower bound of  $W_1(m_x^\alpha, m_y^\alpha)$,  $\pi_T$ is optimal.

\medskip

\textbf{Case 2--3: $M_2 = 
M_3 = \emptyset$.}

In this case, one can transport every mass by pair of vertices of distance $1$ by Algorithm \ref{alg:regular}, and hence $\pi_T$ is optimal. Note that 
$$ W_1(m_x^\alpha, m_y^\alpha)  =  -\alpha +  \frac{1-\alpha}{d} \left(1+ 3|A| - 2|M_{1}| - |M_{2}| \right)=  -\alpha +  \frac{1-\alpha}{d} \left(1+ |M_{1}| \right). $$
\end{proof}

\begin{proof}[Proof of Theorem \ref{thm:main}.] {By the proof of Theorem \ref{thm:regularalg}, Theorem \ref{thm:main} immediately follows.}
\end{proof}

Among the diverse classes of regular graphs, Cayley graphs are highly representative. Let $\mathcal{G}$ be a finite group and let $\mathcal{S}\subseteq\mathcal{G}$ be an inverse-closed generating set that does not contain the identity element. 
The \textit{Cayley graph} $\mathrm{Cay}(\mathcal{G},\mathcal{S})$ is the graph with vertex set $\mathcal{G}$ in which two vertices $g,h\in\mathcal{G}$ are adjacent if and only if $h=gs$ for some $s\in\mathcal{S}$. 
By definition, the Cayley graph $\mathrm{Cay}(\mathcal{G}, \mathcal{S})$ is a simple graph.

\begin{example}\label{ex:power_cycles}
Suppose that $G = \operatorname{Cay}(\mathbb{Z}_n, \mathcal{S})$, where $\mathcal{S} = \{\pm 1, \pm 2, \dots, \pm k\}$ for $1 \le k < \lceil n/2 \rceil$.
For simplicity, label the vertices of $G$ by $\{0, 1, \dots, n-1\}$ in order.
Let $\ell$ denote the difference between the distinct indices $i$ and $j$ modulo $n$, where $1 \le \ell \le k$.
Then the $\alpha$-Ricci curvature is given as \Cref{tab:power_cycles}.
\begin{table}[h!]
\centering
\begin{tabular}{c|c|c}
\hline
$\frac{\kappa^\alpha(i,j)}{1-\alpha}$ & $0 \le \alpha \le \frac{1}{2k+1}$ & $\frac{1}{2k+1} \le \alpha \le 1$ \\ \hline
$n \le 2k+\ell$ & $\frac{4k-n+2}{2k}$ & $\frac{4k-n+2}{2k}$ \\ \hline
$2k+\ell < n \le 3k$ & $\frac{2k-\ell+1}{2k}$ & $\frac{2k-\ell+1}{2k}$ \\ \hline
$3k < n \le 3k+\ell$ & $\frac{1}{1-\alpha}-\frac{n+\ell-3k-1}{2k}$ & $\frac{5k-n-\ell+2}{2k}$ \\ \hline
$3k+\ell < n \le 4k+\ell$ & $\frac{1}{1-\alpha}-\frac{\ell}{k}$ & $\frac{2k-2\ell+1}{2k}$ \\ \hline
$n > 4k+\ell$ & $\frac{k-\ell}{k}$ & $\frac{k-\ell}{k}$ \\ \hline
\end{tabular}
\caption{Values of $\frac{\kappa^\alpha(i,j)}{1-\alpha}$}
\label{tab:power_cycles}
\end{table}

In particular, $\kappa^{\mathrm{LLY}}(i,j)$ is strictly positive for every edge $\{i,j\}$, except when $k = \ell$ and $n > 5k$ where it vanishes. 
This reflects the general tendency for the Lin--Lu--Yau Ricci curvature to increase as the local structure around an edge becomes more dense and symmetric.
\end{example}

{We define the \textit{diameter} $\text{diam}(G)$ of a graph $G$ by $\sup_{x,y \in V} \operatorname{dist}_G(x,y)$.}

\begin{corollary}\label{coro:diam}
Let $G=(V,E)$ be a {locally finite  graph} and $\tau:= \min\{\operatorname{diam}(G),3\}$. 
Suppose that $\{x,y \} \in E$, $\deg_G(x)=\deg_G(y)=d$ and $|N(x) \cap N(y)| = \delta$.
The $\alpha$-Ricci curvature is given by
\begin{equation*}
    \dfrac{\kappa^{\alpha}(x,y)}{1-\alpha} = 
    \begin{cases}
        \dfrac{1+2\alpha-\tau}{1-\alpha}+\dfrac{\delta}{d}+(\tau-1)\dfrac{\delta+|M_1|+2}{d} + \left\lfloor\dfrac{\tau}{3}\right\rfloor \dfrac{|M_2|}{d}& \text{ if } 0 \le \alpha \le \frac{1}{d+1}\\[10pt]
        \dfrac{\delta+2}{d}+(\tau-1)\dfrac{\delta-d+|M_1|+1}{d} + \left\lfloor\dfrac{\tau}{3}\right\rfloor \dfrac{|M_2|}{d}& \text{ if } \frac{1}{d+1} \leq \alpha \leq 1.
    \end{cases}    
\end{equation*} 
\end{corollary}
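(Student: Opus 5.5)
The plan is to derive the corollary from Theorem~\ref{thm:main}, more precisely from the explicit value of $W_1(m_x^\alpha,m_y^\alpha)$ computed in the proof of Theorem~\ref{thm:regularalg}. By the remark preceding Algorithm~\ref{alg:regular}, that proof uses only $\deg_G(x)=\deg_G(y)=d$, not regularity of $G$. The whole argument is bookkeeping: express $|A|$, $|M_3|$ and $t$ in terms of $d,\delta,|M_1|,|M_2|$ and $\tau$.

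\textbf{Basic identities.} Since $N_G(x)$ is the disjoint union of $\{y\}$, $\Delta=N_G(x)\cap N_G(y)$ and $A$, we get $|A|=|B|=d-1-\delta$. Since $M=M_1\cup M_2\cup M_3$ is a perfect matching of $K$, we get $|M_3|=|A|-|M_1|-|M_2|$. Substituting $|A|=d-1-\delta$ into the regime $\alpha\ge \frac1{d+1}$ gives
\[
\frac{d+1-3|A|+2|M_1|+|M_2|}{d}=\frac{3\delta-2d+4+2|M_1|+|M_2|}{d}.
\]

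\textbf{Case split on $\tau\in\{1,2,3\}$.}
\begin{enumerate}
\item[$\tau=3$.] The displayed value coincides with the corollary's upper-regime formula, since $(\delta+2)+2(\delta-d+|M_1|+1)+|M_2|$ equals the numerator above.
\item[$\tau=2$.] All distances in $G$ are at most $2$, so $M_3=\emptyset$ and $|M_2|=|A|-|M_1|$. The theorem's numerator becomes $d+1-2|A|+|M_1|=2\delta-d+3+|M_1|$. This agrees with the corollary's expression $(\delta+2)+(\delta-d+|M_1|+1)$, and the $|M_2|$ term drops because $\lfloor 2/3\rfloor=0$.
\item[$\tau=1$.] Here $G=K_{d+1}$, so $A=\emptyset$ and $\delta=d-1$. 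Both expressions give $(d+1)/d$.
\end{enumerate}
The regime $\alpha<\frac1{d+1}$ is handled the same way using the second line of Theorem~\ref{thm:main}. For $t=3$ the term $\frac{\alpha(3-t)}{1-\alpha}$ vanishes. The corollary's term $\frac{1+2\alpha-\tau}{1-\alpha}$ equals $-2$ when $\tau=3$, so both sides reduce to $-2+\frac{3\delta+4+2|M_1|+|M_2|}{d}$. For $\tau=2$ and $\tau=1$ one checks the analogous identities in the same manner.

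\textbf{Expected obstacle.} The main difficulty is that Theorem~\ref{thm:main} is phrased with $t=\max_{e\in E(M)}\operatorname{dist}_G(e)$, while the corollary uses $\tau=\min\{\operatorname{diam}(G),3\}$. These agree in the generic situations above: $M_3\neq\emptyset$ forces $t=3$, and $\operatorname{diam}(G)\le 2$ forces $t=\tau$. However, one can have $\tau=3$ with $M_3=\emptyset$, so that $t\le 2$.

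For that situation I would not plug in $t$ directly. Instead I would go back to the lower bounds obtained in Cases~2--2 and~2--3 of the proof of Theorem~\ref{thm:regularalg} and recompute $W_1$ directly. When $M_3=\emptyset$, the distance-$3$ contribution $3|A|-2|M_1|-|M_2|$ collapses to $|M_1|+2|M_2|$, and the question is whether the resulting formula matches the corollary with $\tau=3$. This needs care: if the two formulas differ there, the corollary must be read with $\tau$ interpreted as the relevant local distance $t$. I would either verify the agreement or record that interpretation explicitly.
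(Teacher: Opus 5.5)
You reduce the corollary to Theorem~\ref{thm:main}, as the paper does (its proof is a single sentence). Your bookkeeping for $\alpha\ge\frac1{d+1}$ is correct. That branch of the theorem does not involve $t$, so the $\tau=3$ expression is valid for every graph, and the $\tau\le2$ expressions agree with it because $M_3=\emptyset$ (resp.\ $A=\emptyset$).

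The real gap is the regime $\alpha<\frac1{d+1}$, and the obstacle you set aside cannot be overcome: there the stated formula is false whenever $t\neq\tau$. Your claim that $\operatorname{diam}(G)\le2$ forces $t=\tau$ is also wrong; it only forces $t\le 2$, and $t=1$ is possible.

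To quantify, write the theorem's lower-regime value, with $|A|=d-1-\delta$, as
\[
T(t)=\frac{-2d+1+t+3\delta+2|M_1|+|M_2|}{d}+\frac{\alpha(3-t)}{1-\alpha}.
\]
This is affine in $t$ with slope $\frac1d-\frac{\alpha}{1-\alpha}$. The corollary's expression at $\tau=3$ is identically $T(3)$. At $\tau=2$ it equals $T(2)$ after using $|M_2|=|A|-|M_1|$. Hence the corollary overstates $\kappa^\alpha(x,y)/(1-\alpha)$ by $(\tau-t)\bigl(\frac1d-\frac{\alpha}{1-\alpha}\bigr)$, which is strictly positive for $\alpha<\frac1{d+1}$ whenever $t<\tau$.

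Three counterexamples at $\alpha=0$, where the trivial bound (every unit moves distance at least $1$) certifies $W_1$:
\begin{enumerate}
\item Take $C_5$ with vertices $x,y,v,w,u$ in cyclic order and attach a pendant vertex to $w$. Then $d=2$, $\delta=0$, $A=\{u\}$, $B=\{v\}$, $|M_2|=1$, $t=2$ and $\tau=3$. Transporting $u\to x$ and $y\to v$ along edges gives $W_1(m_x^0,m_y^0)=1$, so $\kappa^0(x,y)=0$, while the formula gives $\frac12$.
\item In the $4\times4$ rook's graph (regular, diameter $2$), $H_1$ has a perfect matching, so $t=1$. Here $\kappa^0(x,y)=\frac13$, but the formula gives $\frac12$.
\item In $K_4$ minus an edge, with $x,y$ the two vertices of degree $3$, we have $A=\emptyset$ and $\tau=2$. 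Here $\kappa^0(x,y)=\frac23$, but the formula gives $1$.
\end{enumerate}

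So no argument can ``verify the agreement'' in the case you left open. What your substitution argument does prove, once you split on $t\in\{1,2,3\}$, is the corollary with $\tau$ replaced by the local quantity $t$. For $t=2$ use $|M_2|=|A|-|M_1|$; for $t=1$ use $|M_1|=|A|$, with the convention $t=1$ when $A=\emptyset$. For $\alpha\ge\frac1{d+1}$ the version with $\tau$ is also correct.

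The paper's one-line proof silently identifies $\tau$ with $t$. You were right to distrust that step, but the correct conclusion is that the statement needs this correction, not that a remaining case needs checking.
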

\begin{proof}
This follows immediately from Theorem \ref{thm:main}.
\end{proof}

\begin{remark}
Suppose $\operatorname{diam}(G) = 2$. Let $\nu(x,y)$ denote the size of a maximum matching between $G[N_G(x) \setminus N_G[y]]$ and $G[N_G(y) \setminus N_G[x]]$. In this case, by Corollary \ref{coro:diam} we have 
    \begin{equation}
        \kappa^{\textup{LLY}}(x,y) = \frac{\delta - d + \nu(x,y) + 3}{d}.
    \end{equation}
In particular, for a strongly regular graph with fixed parameters, $\kappa^{\textup{LLY}}(x,y)$ is determined primarily by the matching number $\nu(x,y)$. While both the $4 \times 4$ Rook's graph and the Shrikhande graph are $(16, 6, 2, 2)$-strongly regular graphs, they exhibit different curvatures ($\frac{2}{3}$ and $\frac{1}{3}$, respectively) due to differences in their matching numbers $\nu(x, y)$ (see \cite{BCDDFP20}).
\end{remark}

\section{A transportation plan on graphs}\label{sec:algorithm}
Let $G=(V,E)$ be a
graph and  
$\mu, \nu$ be probability distributions on $V$ with finite support.
We partition the vertex set $V$ based on the sign of $\mu-\nu$:
\medskip
\[
V_{+} := \{ x \in V : (\mu-\nu)(x) > 0 \},\quad V_{0} := \{ x \in V : (\mu-\nu)(x) = 0 \},\quad V_{-} := \{ x \in V : (\mu-\nu)(x) < 0 \}.
\]

We construct the transportation plan on the complete bipartite graph $K_{V_+, V_-}$ through the following iterative process.

\begin{algorithm}[Transportation plan from $\mu$ to $\nu$]\label{alg:general}
\phantom{}
\begin{enumerate}[label = Step \arabic*]
\item\label{Alg:1} Set the initial mass vector as $m^{(1,1)} := \mu-\nu$ and initialize $s := 1$.
\item\label{Alg:2} Let $V_{+}^{(s,1)} := \{x \in V : m^{(s,1)}(x) > 0\}$ and $V_{-}^{(s,1)} := \{x \in V : m^{(s,1)}(x) < 0\}$. Let $H_s$ be the subgraph of the complete bipartite graph $K_{V_{+}^{(s,1)},V_{-}^{(s,1)}}$ with the edge set 
\[
E_s := \{\{x,y\}\mid x \in V_{+}^{(s,1)}, y \in V_{-}^{(s,1)}, \operatorname{dist}_G(x,y) = s\} \subset E(K_{V_{+}^{(s,1)},V_{-}^{(s,1)}}).
\]
For each linear ordering $\sigma$ of edges in $H_s$, we define a sequence of temporary mass vectors starting with $m_{\sigma}^{(s,1)} := m^{(s,1)}$.
\item\label{Alg:3} At each step $t \geq 1$ for a fixed edge ordering $\sigma$, let $V_{+,\sigma}^{(s,t)}$ and $V_{-,\sigma}^{(s,t)}$ denote the sets of vertices with positive and negative components in $m_{\sigma}^{(s,t)}$, respectively; that is, 
\[
V_{+,\sigma}^{(s,t)} := \{x \in V : m_{\sigma}^{(s,t)}(x) > 0 \} \quad\text{and}\quad V_{-,\sigma}^{(s,t)} := \{x \in V : m_{\sigma}^{(s,t)}(x) < 0 \}.
\]
If $V_{+,\sigma}^{(s,t)}$ or $V_{-, \sigma}^{(s,t)}$ is empty, then we terminate
the process for the ordering $\sigma$ and proceed to \ref{Alg:6}. If all edges in $H_s$ have been processed according to the ordering $\sigma$, we record the total transported mass for this $\sigma$ and proceed to Step 6.
\item\label{Alg:4} Otherwise, select the next edge $e = \{x, y\} \in E_s$ from the chosen ordering $\sigma$, where $x \in V_{+,\sigma}^{(s,t)}$ and $y \in V_{-,\sigma}^{(s,t)}$. We update the temporary mass vector to $m_{\sigma}^{(s,t+1)}$ as follows:
\begin{equation}\label{eq:transport description}
m_{\sigma}^{(s,t+1)}(v) = 
\begin{cases}
m_{\sigma}^{(s,t)}(x) - \min\left(|m_{\sigma}^{(s,t)}(x)|, |m_{\sigma}^{(s,t)}(y)|\right) & \text{ if } v = x, \\ 
m_{\sigma}^{(s,t)}(y) + \min\left(|m_{\sigma}^{(s,t)}(x)|, |m_{\sigma}^{(s,t)}(y)|\right) & \text{ if } v = y, \\ 
m_{\sigma}^{(s,t)}(v) & \text{ otherwise. }
\end{cases}
\end{equation}
\item\label{Alg:5} Increment $t$ by $1$ and repeat from Step 3 for the current ordering $\sigma$.
\item\label{Alg:6} Among all linear orderings $\sigma$ of edges in $H_s$, choose one that maximizes the total transported mass. We then set $m^{(s+1,1)}$ to be the final mass vector obtained under this optimal ordering, increment $s$ by $1$, and return to \ref{Alg:2}.
\end{enumerate}   
\end{algorithm}

Since $|V_{+}\sqcup V_{-}|<\infty$ and $\mu,\nu$ are probability distributions, Algorithm~\ref{alg:general} terminates after finitely many steps and, by \eqref{eq:transport description}, the algorithm yields a valid transportation plan.

\begin{remark}
The total cost may depend on the processing order of the edges within each $H_s$.
However, we expect that the minimal total transportation cost can be attained by considering all orderings that maximize the transported mass at each step, although a general proof remains elusive.
We show that Algorithm \ref{alg:general} successfully yields an optimal transportation plan not only for the regular graph cases covered by Theorem \ref{thm:regularalg}, but also across all subsequent results in the following sections where explicit computational results are obtained.
\end{remark}

Nonetheless, even when the algorithm does not immediately guarantee the optimal transportation cost for a general graph, we show that one can construct a function on $V_+ \sqcup V_-$ to compute this cost directly. 
Verifying the $1$-Lipschitz property of this function %on the entire graph $G$ 
then certifies the optimality of the plan.

\begin{lemma}\label{lem:acyclic}
    Let $G'=(V_{+} \sqcup V_{-},E')$ be the subgraph of $K_{V_+,V_-}$ such that $E'$ is the collection of edges in $K_{V_+,V_-}$ used for the transportation plan produced by Algorithm \ref{alg:general}. 
   Then $G'$ is acyclic.
\end{lemma}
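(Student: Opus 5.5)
The plan is to exploit the greedy nature of \ref{Alg:4}: every time an edge is used, it \emph{exhausts} at least one of its endpoints, and an exhausted vertex can never be used again. Since every vertex on a cycle has two incident cycle edges, the first cycle edge to be used will give a contradiction. Throughout, $E'$ is the set of edges $\{x,y\}$ that carry a positive amount of mass in the final plan, that is, in the chosen ordering at each stage $s$.

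\textbf{Step 1: preliminary invariants.}
\begin{enumerate}
\item[(a)] Signs are preserved. By \eqref{eq:transport description}, if $m^{(s,t)}_\sigma(x)>0$ then $m^{(s,t+1)}_\sigma(x)=m^{(s,t)}_\sigma(x)-\min(\cdot,\cdot)\ge 0$. Symmetrically, a negative entry at $y$ stays $\le 0$. Hence a vertex never changes from positive to negative or vice versa. In particular, $V_+^{(s,1)}\subseteq V_+$ and $V_-^{(s,1)}\subseteq V_-$ for all $s$, so $G'$ is indeed a subgraph of $K_{V_+,V_-}$.
\item[(b)] Zero is absorbing. A vertex whose mass has become $0$ never again lies in $V_{\pm,\sigma}^{(s,t)}$. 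Therefore it is never again an endpoint of a processed edge, in this stage or in any later stage, because later stages start from $m^{(s+1,1)}$.
\item[(c)] Each edge is used once. Each edge $\{x,y\}\in E'$ is processed at exactly one moment of the whole run. Indeed, at stage $s$ only edges with $\operatorname{dist}_G(x,y)=s$ are considered, so distinct stages use disjoint edge sets. Within a stage, the ordering $\sigma$ lists each edge once.
\end{enumerate}
This gives a well-defined linear order on $E'$ by time of use: first by stage $s$, then by step $t$ in the selected ordering.

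\textbf{Step 2: the key observation.} When an edge $e=\{x,y\}$ is processed with $x\in V_{+,\sigma}^{(s,t)}$ and $y\in V_{-,\sigma}^{(s,t)}$, the transported amount $\min(|m(x)|,|m(y)|)$ is positive. After the update, at least one of $m(x)$ and $m(y)$ equals $0$. By Step 1(b), that endpoint has no edge of $E'$ processed after $e$.

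\textbf{Step 3: conclusion.} Suppose $G'$ contains a cycle $C$. Let $e=\{x,y\}$ be the edge of $C$ that is earliest in the order of Step 1(c). By Step 2, one endpoint, say $x$, has mass $0$ immediately after $e$ is processed. But $x$ lies on $C$, so it is incident to a second edge $e'\in E(C)$, and $e'$ is processed strictly after $e$, by minimality of $e$ and Step 1(c). At the moment $e'$ is processed, $x$ must have nonzero mass. This contradicts Step 1(b), so $G'$ is acyclic.

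The only point needing care is the bookkeeping in Step 1: that signs never flip, and that edges are not reused across stages. Once these are stated cleanly, the cycle argument is immediate.
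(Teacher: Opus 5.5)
Your proof is correct and is essentially the paper's argument: take the edge of a putative cycle that is processed first, and note that the update by $\min(|m(x)|,|m(y)|)$ exhausts one of its endpoints, so the other cycle edge at that endpoint can never carry mass. Your Step~1 bookkeeping makes explicit what the paper's proof uses implicitly: signs never flip, zero mass stays zero across stages, and each edge is processed at most once because stage $s$ only handles pairs at distance $s$.
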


\begin{proof}
By construction, $G'$ is bipartite with the bipartition $V' = V_+ \sqcup V_-$. Assume, for the sake of contradiction, that $G'$ contains a cycle $C$. Since $G'$ is bipartite, $C$ must alternate between vertices in $V_+$ and vertices in $V_-$. Label these vertices such that $V(C) = \{x_1,\dots,x_m\} \cup \{y_1,\dots,y_m\}$ for $x_i \in V_+$, $y_i \in V_-$, and
\[
E(C) = \left\{\{x_i,y_i\} \mid 1 \le i \le m\right\} \cup \left\{ \{y_i,x_{i+1}\} \mid 1 \le i \le m-1\right\} \cup \left\{\{y_m,x_1\}\right\}.
\]
For simplicity, we omit the transport indices $(s,t)$ (i.e., we denote the mass vector at any step simply by $m$) and regard the graph $G'$ as a subgraph of $K_{V_+,V_-}$ without explicit reference to the graph distance $\operatorname{dist}_G$.

Consider the first edge $\{x_1,y_1\}$ of $C$ that is processed by the algorithm. In the updating process, the mass transferred along $\{x_1,y_1\}$ is $\min\left(|m(x_1)|, |m(y_1)|\right)$. After this transfer, at least one of $m(x_1)$ or $m(y_1)$ is reduced to zero. 

If $m(y_1)=0$, then the edge $\{y_1,x_2\}$ of $C$ can no longer be chosen in the plan. Similarly, if $m(x_1)=0$, then the edge $\{y_m,x_1\}$ can no longer be used. 
In either case, at least one edge of $C$ cannot be used in the transportation, contradicting the assumption that all edges of $C$ belong to $E'$. 
Hence, $G'$ contains no cycles.
\end{proof}

\begin{remark}
Since the graph $G'$ in Lemma \ref{lem:acyclic} is a forest, the flow problem on $G'$ satisfies 
$$
\max_{\phi \in \Phi(G')} \sum_{e \in E'} \phi(e)
=
\min_{S \in \mathcal{S}(G')} \sum_{v \in S} b(v),
$$
where $\Phi(G')$ is the set of feasible flows, $\mathcal{S}(G')$ is the family of vertex covers of $G'$, and the vertex capacity function $b:V\to\mathbb{R}_{\ge 0}$ is {defined by $b(x) = |(\mu-\nu)(x)|$} .  This follows from the fact that the incidence matrix of a bipartite graph is totally unimodular and using the strong duality theorem of linear programming (see \cite[Section 19.1]{Sch86}).
\end{remark}

\begin{theorem}\label{prop:TransportationPlan}
Let $G=(V,E)$ be a %connected 
graph and let $\mu$ and $\nu$ be probability distributions on $V$. Suppose that the difference $\mu-\nu$ has finite support. 
Let $P$ be the transportation plan obtained by Algorithm \ref{alg:general} from $\mu$ to $\nu$. Let $G'$ be the subgraph of $K=K_{V_+,V_-}$ consisting of the vertices and edges used by $P$, with a bipartition $V_{+} \sqcup V_{-}$. 
Then there exists a function $f: V_{+} \sqcup V_{-} \rightarrow \mathbb{R} $ such that 
\begin{equation}\label{pseudo lipschitz}
f(x)-f(y) = \textup{dist}_G (x,y), \quad \text{for all} \;\; \{ x, y \} \in E(G') \; \text{with} \; \;  x \in V_{+}, \; y \in V_{-}
\end{equation}
and the cost of $P$ is 
\begin{equation}\label{equ:totalcost}
     \sum_{v \in V} f(v) \cdot \left(\mu(v)-\nu(v)\right).
\end{equation}
{If the function $f:V_{+}\sqcup V_{-}\to\mathbb{R}$ is
$1$-Lipschitz on $V_{+}\sqcup V_{-}$ with respect to
$\operatorname{dist}_G$,} then $P$ is optimal. In particular,
the cost of $P$ equals $W_1(\mu,\nu)$.
\end{theorem}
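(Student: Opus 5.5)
By Lemma~\ref{lem:acyclic}, the graph $G'$ is a forest. We build $f$ component by component. In each connected component (a tree $T$) of $G'$, fix a root $r$ and set $f(r)=0$. For every other vertex $w$ of $T$ there is a unique path from $r$ to $w$ in $T$, and we propagate the values along it. If we step along an edge $\{x,y\}$ with $x\in V_+$, $y\in V_-$ and $f$ already known at one endpoint, we define the other by $f(y)=f(x)-\operatorname{dist}_G(x,y)$ or $f(x)=f(y)+\operatorname{dist}_G(x,y)$. Because $T$ has no cycles, each vertex is reached along exactly one path, so $f$ is well defined and \eqref{pseudo lipschitz} holds on every edge of $G'$. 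Isolated vertices of $V_+\sqcup V_-$ (if any) get arbitrary values, say $0$; their masses turn out to be irrelevant, as shown next.

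For the cost formula, write $P(x,y)$ for the mass the algorithm sends along $\{x,y\}\in E(G')$. As noted after Algorithm~\ref{alg:general}, the algorithm yields a valid plan for the difference $\mu-\nu$: every $x\in V_+$ ships out exactly $(\mu-\nu)(x)$, and every $y\in V_-$ receives exactly $(\nu-\mu)(y)$. If one wants to be careful here, one checks that the process exhausts all positive mass, since $\sum(\mu-\nu)=0$ and eventually every pair is at some finite distance $s$.

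Mass on $V_0$ never moves, so the cost of $P$ is
\[
\sum_{\{x,y\}\in E(G')} P(x,y)\operatorname{dist}_G(x,y)=\sum_{\{x,y\}\in E(G')} P(x,y)\bigl(f(x)-f(y)\bigr).
\]
Regrouping this sum by vertices gives
\[
\sum_{x\in V_+} f(x)\sum_y P(x,y)-\sum_{y\in V_-} f(y)\sum_x P(x,y)=\sum_{v} f(v)(\mu-\nu)(v),
\]
which is \eqref{equ:totalcost}. (A plan moving only the excess $\mu-\nu$ extends to a plan from $\mu$ to $\nu$ by keeping $\min(\mu,\nu)$ in place at zero cost.)

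For optimality, suppose $f$ is $1$-Lipschitz on $V_+\sqcup V_-$. By McShane extension, $\tilde f(v)=\min_{w\in V_+\sqcup V_-}\bigl(f(w)+\operatorname{dist}_G(v,w)\bigr)$ is a $1$-Lipschitz function on all of $V$ that agrees with $f$ on $V_+\sqcup V_-$. The minimum is over a finite set, and the graph is connected, so $\tilde f$ is finite. Since $\mu-\nu$ vanishes off $V_+\sqcup V_-$, Theorem~\ref{thm:KR_duality} gives
\[
W_1(\mu,\nu)\ge \sum_v \tilde f(v)(\mu-\nu)(v)=\operatorname{cost}(P)\ge W_1(\mu,\nu).
\]
Hence $P$ is optimal and its cost equals $W_1(\mu,\nu)$.

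The main point to be careful about is that duality is stated for finitely supported distributions, while here only $\mu-\nu$ is finitely supported. The workaround is to note that both $W_1$ and the cost depend only on $\mu-\nu$ after cancelling the common part $\min(\mu,\nu)$. We then apply the argument to the normalized finitely supported measures $(\mu-\nu)^{\pm}/\|(\mu-\nu)^+\|_1$, or alternatively use the easy direction of duality directly: for any plan $\pi$, $\sum\pi(x,y)\operatorname{dist}(x,y)\ge\sum\pi(x,y)(\tilde f(x)-\tilde f(y))$. That easy direction is all the lower bound needs.
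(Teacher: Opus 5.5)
Your proposal is correct and follows essentially the same route as the paper: acyclicity of $G'$ (Lemma~\ref{lem:acyclic}) lets you define $f$ by propagating $\pm\operatorname{dist}_G$ along unique tree paths, the cost formula comes from regrouping the edge sum by vertices, and optimality follows from the McShane extension together with Kantorovich--Rubinstein duality. The only difference is that the paper propagates along a spanning tree of $K_{V_+,V_-}$ containing $G'$ instead of rooting each component separately, which changes nothing; your extra checks (that the algorithm exhausts all of $\mu-\nu$, and that only the easy direction of duality is needed when $\mu,\nu$ are not finitely supported) are points the paper passes over without comment.
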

\begin{proof}
By Lemma \ref{lem:acyclic}, $G'$ is acyclic.
Let $\widetilde G$ be a spanning tree of $K_{V_+,V_-}$ containing $G'$. Choose a vertex $x_1\in V_+ \sqcup V_{-}$ and assign an arbitrary real value $\eta$ to $f(x_1)$. For each vertex
$v\in V(\widetilde G)\setminus\{x_1\}$, let
$ x_1=v_0,v_1,\ldots,v_k=v $
be the unique path joining $x_1$ and $v$ in $\widetilde G$.
We define $f(v_j)$ inductively for $j=1,\cdots, k$ by
\begin{equation*}
 f(v_j) := \begin{cases}
        f(v_{j-1}) + \textup{dist}_G(v_{j-1},v_j) & \text{if} \; v_{j} \in V_{+}, \; \{ v_{j-1}, v_j \} \in E (\widetilde G) \\
        f(v_{j-1}) - \textup{dist}_G(v_{j-1},v_j) & \text{if} \; v_j \in V_{-}, \; \{ v_{j-1}, v_j \} \in E(\widetilde G)
        \end{cases}
\end{equation*}
Since $\widetilde G$ is a tree, $f$ is a well-defined function on $V(\widetilde G)$, and it satisfies
$$
f(x)-f(y) = \textup{dist}_G(x,y) \;\; \text{for} \;\; \{ x, y \} \in E(\widetilde G) \;\;
\text{with} \;\; x \in V_{+},  \; y \in V_{-}.
$$
Therefore, \eqref{pseudo lipschitz} follows since $G'$ is a subgraph of $\widetilde G$.

The remaining part is to calculate the cost of $P$. We write
\[
V_{+}=\{x_1,\ldots,x_m\}
\quad\text{and}\quad
V_{-}=\{y_1,\ldots,y_n\}.
\]
For each edge in $E(K)$ joining $x_i\in V_{+}$ and $y_j\in V_{-}$, we denote it by $e_{ij}$ and write $m_{ij}$ for the mass transported along $e_{ij}$, where $i=1,\ldots,m$ and $j=1,\ldots,n$.
By \eqref{pseudo lipschitz}, the cost of $P$ is calculated as
\begin{equation*}
\begin{aligned}
\textup{Cost}(P) = \sum_{i,j} m_{ij} (f(x_i) - f(y_j) ) &= \sum_{i} f(x_i) \bigg( \sum_{y_j \in N_{G'} (x_i) } m_{ij} \bigg) - \sum_j f(y_j) \bigg( \sum_{x_i \in N_{G'} (y_j )} m_{ij} \bigg) \\
&=\sum_{i} f(x_i) \big| (\mu- \nu) (x_i) \big| - \sum_{j} f(y_j) |(\mu - \nu) (y_j) | \\
&=\sum_{x \in V_{+}} f(x) (\mu(x)-\nu(x)) + \sum_{y \in V_{-}} f(y) (\mu(y) - \nu (y)) \\
&= \sum_{v \in V} f(v) (\mu(v)-\nu(v)).
\end{aligned}
\end{equation*} 

If the function $f:V_{+}\sqcup V_{-}\to\mathbb{R}$ in Theorem
\ref{prop:TransportationPlan} is $1$-Lipschitz on
$V_{+}\sqcup V_{-}$ with respect to $\operatorname{dist}_G$, then by \cite[Theorem 1]{Mcshane},
$f$ extends to a $1$-Lipschitz function on $V$.
Therefore, by Theorem \ref{thm:KR_duality}, the transportation cost of $P$
equals $W_1(\mu,\nu)$.
This completes the proof.
\end{proof}

\section{Ranges of the $\alpha$-curvatures of graphs}\label{sec:range}

In what follows, we focus on the $1$-Wasserstein distance between certain probability mass distributions arising in the study of Ricci curvature on graphs. 
From now on, all discussions will be based on the distribution function $m_{x}^{\alpha}$ given by \eqref{mass funcion for alpha-Ric}. 

\begin{lemma}\label{lem:Wrange}
Let $G=(V,E)$ be a {locally finite   graph}
with vertex set {$V=\{x_1,x_2,\cdots\}$. } {For an edge $\{ x_i,x_j \} \in E(G)$, let $m_i^\alpha := m_{x_i}^{\alpha}, m_j^{\alpha} := m_{x_j}^{\alpha}$, and $d_i := \operatorname{deg}_{G} (x_i), d_j := \operatorname{deg}_{G} (x_j)$. Assume that $d_i \leq d_j$. 
Then
\[
\frac{1}{2}\,\|m^\alpha_i - m^\alpha_j\|_1
\;\leq\;
W_1(m^\alpha_i,m^\alpha_j)
\;\leq\;
\frac{3}{2}\,\|m^\alpha_i - m^\alpha_j\|_1 - \Delta_{ij}(\alpha),
\]
where 
$
\| m_i^{\alpha} - m_j^\alpha \|_1 := \sum_{v \in V} |m_i^\alpha (v) - m_j^\alpha (v) |
$
and
\[
\Delta_{ij}(\alpha)
:=
\begin{cases}
2\alpha - \tfrac{1-\alpha}{d_i} - \tfrac{1-\alpha}{d_j}
& \text{if }  \frac{1}{d_i+1} \leq \alpha \leq 1, \\

\frac{2(1-\alpha)}{d_i}-\alpha-\frac{1-\alpha}{d_j}
& \text{if } \frac{1}{d_j+1}\leq \alpha \leq \frac{1}{d_i+1}, \\

2(1-2\alpha)
& \text{if } 0 \leq \alpha \leq \frac{1}{d_j+1}
\text{ and } d_i=1, \\

2\left(\frac{1-\alpha}{d_j}+\frac{1-\alpha}{d_i}-2\alpha\right)
& \text{if } 0 \leq \alpha \leq \frac{1}{d_j+1}
\text{ and } d_i \ge 2.
\end{cases}
\]
Moreover,
\begin{itemize}
    \item Equality in the lower bound holds if and only if 
    there exists a transportation plan such that any mass in $G$ is transported along paths of length 1 in $G$.
    \item Equality in the upper bound holds if and only if the edge $\{x_i,x_j\}$ does not lie in any induced cycle of length $3$, $4$, or $5$ in $G$.
\end{itemize}
}
\end{lemma}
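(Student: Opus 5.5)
The plan is to split $\mu-\nu$ with $\mu=m_i^\alpha$, $\nu=m_j^\alpha$ into positive part (total mass $\frac12\|\mu-\nu\|_1$) and negative part, and to compare costs of transport plans via the fact that every moved unit of mass travels distance at least $1$ and, along the edge $\{x_i,x_j\}$, at most $3$ (any $u\in N[x_i]$, $v\in N[x_j]$ satisfy $\operatorname{dist}_G(u,v)\le 3$).

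\textbf{Lower bound.} Any transport plan $\pi$ from $\mu$ to $\nu$ can be reduced (without increasing cost) to one moving only the excess mass $(\mu-\nu)_+$ to $(\mu-\nu)_-$. Every such unit travels a distance of at least $1$, so the cost is at least $\frac12\|\mu-\nu\|_1$. Equality holds exactly when some optimal plan uses only pairs at distance $1$, that is, paths of length $1$. Equivalently, by Theorem \ref{thm:KR_duality}, one could exhibit the $1$-Lipschitz function $f=\mathbf 1_{V_+}$ when this is possible.

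\textbf{Upper bound.} I will compute $\frac12\|\mu-\nu\|_1$ explicitly in each $\alpha$-regime. The sign of $\mu-\nu$ at $x_i$, at $x_j$, and on $N(x_i)\cap N(x_j)$ depends on comparing $\alpha$ with $\frac{1-\alpha}{d_i}$ and $\frac{1-\alpha}{d_j}$; these comparisons give the thresholds $\frac1{d_i+1}$ and $\frac1{d_j+1}$. I will then write down an explicit plan in each regime:
\begin{itemize}
\item the mass exchanged between $x_i$ and $x_j$ is moved at cost $1$;
\item mass from $x_i$ to $N(x_j)$, or from $N(x_i)$ to $x_j$, is moved at cost at most $2$;
\item all remaining mass is moved at cost at most $3$.
\end{itemize}
The resulting cost equals $3\cdot\frac12\|\mu-\nu\|_1$ minus a savings term, and the bookkeeping should show that this savings is exactly $\Delta_{ij}(\alpha)$. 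For instance, when $\alpha\ge \frac1{d_i+1}$ the amount $\alpha-\frac{1-\alpha}{d_j}$ leaves $x_i$: part of it goes to $x_j$ at cost $1$ (saving $2$ per unit) and the rest goes to $N(x_j)$ at cost $2$ (saving $1$ per unit). Similarly, the deficit $\frac{1-\alpha}{d_i}$ at $x_j$ is filled. The case $d_i=1$ for small $\alpha$ must be treated separately, because then $N(x_i)=\{x_j\}$ and the routing differs.

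\textbf{Equality in the upper bound.} For the forward direction, I will assume $\{x_i,x_j\}$ lies in no induced $3$-, $4$- or $5$-cycle. Then $A$ and $B$ are disjoint, there are no common neighbours, and $\operatorname{dist}(u,v)=3$ for all $u\in A$, $v\in B$. The function $f$ which equals $3,2,1,0$ on $A$, $x_i$, $x_j$, $B$ respectively, extended by McShane as in Theorem \ref{prop:TransportationPlan}, is then $1$-Lipschitz. Feeding it into Theorem \ref{thm:KR_duality} matches the upper bound.

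For the converse, an induced triangle, $4$-cycle or $5$-cycle through the edge gives, respectively, a common neighbour, a pair $u\in A$, $v\in B$ with $\operatorname{dist}=1$, or such a pair with $\operatorname{dist}=2$. Rerouting a small amount of mass along it strictly lowers the cost below the explicit plan, so the upper bound is then strict.

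\textbf{Main obstacle.} I expect the case-by-case accounting for $\Delta_{ij}(\alpha)$, especially in the low-$\alpha$ regimes, to be the hardest part. It requires showing that the explicit plan achieves exactly the stated savings, and that with no short cycles no plan does better. I would handle it by building the dual $f$ simultaneously with the plan, exactly as in Theorem \ref{prop:TransportationPlan}.
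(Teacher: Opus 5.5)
Your outline follows the paper's proof: an explicit plan in each $\alpha$-regime in which forced transports cost $1$ or $2$ and everything else is charged $3$, the certificate $f$ (your $3,2,1,0$ is the paper's $2,1,0,-1$ shifted by one), and rerouting along short cycles for the converse. The lower bound and its equality case are fine. \textbf{Gap in the upper bound.} The step you postpone cannot be completed in general, because for $0\le\alpha<\frac{1}{d_j+1}$ and $d_i\ge 2$ the stated upper bound is false. Write $s=\frac{1-\alpha}{d_i}-\alpha$ for the surplus at $x_j$, $r=\frac{1-\alpha}{d_j}-\alpha$ for the deficit at $x_i$, and $B=N_G(x_j)\setminus N_G[x_i]$. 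The savings $\Delta_{ij}(\alpha)=2(s+r)$ counts $s$ and $r$ as \emph{disjoint} cost-$1$ transports. This means $x_j$ must empty entirely into $B$, while $x_i$ is filled entirely from $N_G(x_i)\setminus\{x_j\}$. Filling $x_i$ from $x_j$, as your first bullet suggests, saves only $2s$. The disjoint routing is possible iff $|B|\frac{1-\alpha}{d_j}\ge s$. If this fails, then every positive vertex is adjacent to $x_i$ and $x_j$ is adjacent to all of $B$. Hence all mass moves at cost $1$, and $W_1=\frac12\|m^\alpha_i-m^\alpha_j\|_1=r+|B|\frac{1-\alpha}{d_j}$. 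This is strictly larger than $\frac32\|m^\alpha_i-m^\alpha_j\|_1-2(s+r)=r+3|B|\frac{1-\alpha}{d_j}-2s$. Concretely, for $K_3$ and $\alpha=0$ we get $\|m^0_i-m^0_j\|_1=1$ and $\Delta_{ij}(0)=2$, so the claimed bound is $-\frac12$ while $W_1=\frac12$. The paper's proof has the same hole: it asserts $\mu_1\ge\frac{1-\alpha}{d_j}+\frac{1-\alpha}{d_i}-2\alpha$ after restricting to $N_G(x_i)\cap N_G(x_j)=\emptyset$. Your plan does work when $N_G(x_i)\cap N_G(x_j)=\emptyset$, since then $|B|\frac{1-\alpha}{d_j}=\frac{(d_j-1)(1-\alpha)}{d_j}\ge\frac{1-\alpha}{2}\ge s$. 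In the other regimes no such overlap occurs.

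\textbf{Gap in the converse of the equality claim.} Your step ``rerouting a small amount of mass strictly lowers the cost'' also fails without extra hypotheses. At $\alpha=1$ both measures are point masses, and $W_1=1=\frac32\cdot 2-2$ for every edge. For a triangle with $d_i=d_j$, the common neighbour $w$ has $(m^\alpha_i-m^\alpha_j)(w)=0$ in every regime, so there is nothing to reroute. For example, take the edge $x_ix_j$ with one common neighbour $w$ and pendant vertices $a\sim x_i$, $b\sim x_j$. At $\alpha=\frac12$ one gets $W_1=\frac56$, which equals the bound, although $x_ix_jw$ is an induced triangle. Even an induced $4$-cycle need not help: for $C_4$ at $\alpha=0$, all mass already moves at cost $1$ and the bound equals $W_1=1$. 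Strict inequality requires that the shortcut carry mass which the bound charges at cost $3$. The shortcut here is a common neighbour, or a pair $a\in A$, $b\in B$ at distance $1$ or $2$. So you need at least $\alpha<1$, $d_i<d_j$ for triangles, and a check in the low-$\alpha$ regime. Two smaller points. First, your single $f$ does not certify the case $d_i=1$; there the two bounds coincide, so equality is automatic. Second, in Case I the deficit at $x_j$ is $\alpha-\frac{1-\alpha}{d_i}$, not $\frac{1-\alpha}{d_i}$. It is the same mass as the part of $x_i$'s surplus sent to $x_j$, so it must be credited only once.
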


\begin{proof}
Let $P$ be the transportation plan for $m_i^{\alpha}$ and $m_j^{\alpha}$ given by Algorithm \ref{alg:general}. 
If the mass is transported directly along the edges of $G$ (i.e., using only paths of length $1$ in $G$), the resulting transportation cost is precisely $\tfrac{1}{2}\,\|m^\alpha_i - m^\alpha_j\|_1$. 
Since any valid transportation plan on $G$ cannot have a cost lower than this value, we immediately obtain the lower bound
\[
\tfrac{1}{2}\,\|m^\alpha_i - m^\alpha_j\|_1 \;\le\; W_1(m^\alpha_i,m^\alpha_j).
\]
To verify the equality case, suppose that the transportation plan $P$ utilizes only paths of length $1$ in $G$. We then define {a function $f: V \to \mathbb{R}$ by
\[
f(v) = \begin{cases}
    1 & \text{if } v \in V_+, \\
    0 & \text{otherwise.}
\end{cases}
\]
Since $f$ is $1$-Lipschitz on $V$,} Theorem \ref{thm:KR_duality} implies that the lower bound is tight; that is, 
\[
W_1(m_i^\alpha, m_j^\alpha) = \tfrac{1}{2}\,\|m^\alpha_i - m^\alpha_j\|_1.
\]

Furthermore, if any positive amount of mass is transported along a path of length at least two, the total transportation cost must be strictly greater than $\tfrac{1}{2}\,\|m^\alpha_i - m^\alpha_j\|_1$.

To derive the upper bounds, we analyze the worst-case scenario for the transportation cost.
More precisely, we assume that every unit of mass transported between $N_G(x_i)\setminus\{x_j\}$ and $N_G(x_j)\setminus\{x_i\}$ must travel along a path of length three (that is, the transport is forced to pass through $x_i$ and $x_j$),
and moreover that $N_G(x_i)\cap N_G(x_j)=\emptyset$.

Let $\tau_1$ denote the total mass sent along edges of length $1$, $\tau_2$ the total mass sent along edges of length $2$, and $\tau_3$ the total mass sent along edges of length $3$.

We distinguish the following cases according to the value of $\alpha$ and the degree $d_i$:
\[
\alpha \in 
\Bigl[\tfrac{1}{d_i+1},\,1\Bigr],\quad
\Bigl[\tfrac{1}{d_j+1},\,\tfrac{1}{d_i+1}\Bigr],\quad
\text{or}\quad
\alpha \le \tfrac{1}{d_j+1},
\]
where in the last case we further distinguish the subcases $d_i=1$ and $d_i\ge 2$.

\medskip
\noindent
\textbf{Case I: $\boldsymbol{\tfrac{1}{d_i+1}\le \alpha \le 1}$.}
For an edge $\{ x_i,x_j \} \in E(G)$, it is optimal to transport at least $\mu_1 \;\ge\; \alpha - \frac{1-\alpha}{d_i}$ units of mass directly from $x_i$ to $x_j$.
If $x_i$ has no neighbor in $N_G(x_j)$, then an additional amount $\mu_2 \;\ge\; \frac{1-\alpha}{d_i} - \frac{1-\alpha}{d_j}$ must be transported from $x_i$ to $N_G(x_j)$ along paths of length at least two.

Define
\[
\mu_1' := \mu_1 - \Bigl(\alpha - \tfrac{1-\alpha}{d_i}\Bigr)\ge 0,
\qquad
\mu_2' := \mu_2 - \Bigl(\tfrac{1-\alpha}{d_i} - \tfrac{1-\alpha}{d_j}\Bigr)\ge 0.
\]
Then
\begin{align*}
\mathrm{Cost}(P) = \mu_1+2\mu_2+3\mu_3
&= \mu_1' + 2\mu_2' + 3\mu_3
   + \Bigl(\alpha - \tfrac{1-\alpha}{d_i}\Bigr)
   + 2\Bigl(\tfrac{1-\alpha}{d_i} - \tfrac{1-\alpha}{d_j}\Bigr) \\
&\le 3(\mu_1+\mu_2+\mu_3)
   - 2\Bigl(\alpha - \tfrac{1-\alpha}{d_i}\Bigr)
   - \Bigl(\tfrac{1-\alpha}{d_i} - \tfrac{1-\alpha}{d_j}\Bigr).
\end{align*}
Since $\mu_1+\mu_2+\mu_3=\tfrac{1}{2}\|m^\alpha_i-m^\alpha_j\|_1$, we obtain
\[
W_1(m^\alpha_i,m^\alpha_j)
\;\le\; \tfrac{3}{2}\,\|m^\alpha_i - m^\alpha_j\|_1
- \Bigl(2\alpha - \tfrac{1-\alpha}{d_i} - \tfrac{1-\alpha}{d_j}\Bigr).
\]

\medskip
\noindent
\textbf{Case II: $\boldsymbol{\tfrac{1}{d_j+1}\le \alpha \le \tfrac{1}{d_i+1}}$.}
Here $\mu_1 \ge \tfrac{1-\alpha}{d_i}-\alpha$ and $\mu_2 \ge \alpha-\tfrac{1-\alpha}{d_j}$.
Setting $\mu_1' := \mu_1 - \Bigl(\tfrac{1-\alpha}{d_i}-\alpha\Bigr)$ and $\mu_2' := \mu_2 - \Bigl(\alpha-\tfrac{1-\alpha}{d_j}\Bigr)$, and repeating the argument of Case~I yields
\[
W_1(m^\alpha_i,m^\alpha_j)
\;\le\; \tfrac{3}{2}\,\|m^\alpha_i - m^\alpha_j\|_1
- \Bigl(\tfrac{2(1-\alpha)}{d_i}-\alpha-\tfrac{1-\alpha}{d_j}\Bigr).
\]
\medskip
\noindent
\textbf{Case III: $\boldsymbol{\alpha \le \tfrac{1}{d_j+1}}$.}
\smallskip
\noindent
\emph{Subcase III(a): $d_i=1$.}
In this case $\mu_1 \ge 1-2\alpha$ and $\mu_2\ge 0$.
Setting $\mu_1':=\mu_1-(1-2\alpha)$ and $\mu_2':=\mu_2$, we obtain
\[
W_1(m^\alpha_i,m^\alpha_j)
\;\le\; \tfrac{3}{2}\,\|m^\alpha_i - m^\alpha_j\|_1-2(1-2\alpha).
\]
\smallskip
\noindent
\emph{Subcase III(b): $d_i\ge 2$.}
Here $\mu_1\ge \tfrac{1-\alpha}{d_j}+\tfrac{1-\alpha}{d_i}-2\alpha$ and $\mu_2\ge 0$.
Setting
$\mu_1' := \mu_1 - \Bigl(\tfrac{1-\alpha}{d_j}+\tfrac{1-\alpha}{d_i}-2\alpha\Bigr)$ and $\mu_2':=\mu_2$,
we conclude that
\[
W_1(m^\alpha_i,m^\alpha_j)
\;\le\; \tfrac{3}{2}\,\|m^\alpha_i - m^\alpha_j\|_1
-2\Bigl(\tfrac{1-\alpha}{d_j}+\tfrac{1-\alpha}{d_i}-2\alpha\Bigr).
\]
\medskip
In all cases, the edge $\{x_i,x_j\}$ does not lie in any cycle $C_3$, $C_4$, or $C_5$ if and only if the worst-case configuration occurs, namely $\mu_1'=\mu_2'=0$.
In this situation, equality is attained in the corresponding upper bound since we can define a function $f: V_+\sqcup V_- \to \mathbb{R}$ by
$$
f(v) = \begin{cases}
    2 & \text{if } v\in N_G(x_i)\setminus N_G[x_j], \\
    1 & \text{if } v = x_i, \\
    0 & \text{if } v = x_j\\
    -1 & \text{if } v\in N_G(x_j)\setminus N_G[x_i]\\
\end{cases}
$$
for Cases I, II, and III(b), and by
$$
f(v) = \begin{cases}
    1 & \text{if } v = x_j, \\
    0 & \text{otherwise}\\
\end{cases}
$$
for Case III(a). By Theorem \ref{prop:TransportationPlan}, it follows that the upper bound is tight. 
\end{proof}

\begin{remark}
We note that Lemma \ref{lem:Wrange} is applicable to a more general setting.  Let $\{ x_i, x_j \} \in E(G)$, and let $\mu$ and $\nu$ be two probability distributions on $V$ associated with $x_i$ and $x_j$, respectively. 
Suppose $\operatorname{supp}(\mu - \nu) \subseteq N_G[x_i] \cup N_G[x_j]$. 
Then, we have
\[
\frac{1}{2}\,\|\mu - \nu \|_1
\;\leq\;
W_1(\mu, \nu)
\;\leq\;
\frac{3}{2}\,\|\mu - \nu \|_1 - \Delta,
\]
where $\Delta \geq 0$ is a correction term depending on the local mass distribution around $x_i$ and $x_j$.
\end{remark}

\begin{theorem}\label{thm:CurvatureRange}
Let $G=(V,E)$ be a {locally finite  graph} and let $\{ x_i,x_j \} \in E$. 
Assume $d_i \leq d_j$, where $d_i := \deg_G(x_i)$ and $d_j := \deg_G(x_j)$, and define $\delta := |N_G(x_i) \cap N_G(x_j)|$.
Then
\begin{equation}\label{equ:CurvatureRangeLowerBdd}
    -2+\frac{2}{d_i} + \frac{2+3\delta}{d_j} \;\leq\; \frac{\kappa^{\alpha}(x_i,x_j)}{1-\alpha} \quad\text{ if } 
    \begin{cases}
        \frac{1}{d_j+1} \leq \alpha \leq 1 \text{ or } \\
        0 \leq \alpha \leq \frac{1}{d_j+1} \text{ and } d_i \geq 2,
    \end{cases}
\end{equation}
\begin{equation}\label{equ:CurvatureRangeUpperBdd}
   \frac{\kappa^{\alpha}(x_i,x_j)}{1-\alpha} \;\leq\; 
   \begin{cases}
       \frac{1+\delta}{d_i}+\frac{1}{d_j} & \text{ if } \frac{1}{d_i+1} \leq \alpha \leq 1\\
       \frac{\alpha}{1-\alpha}+\frac{1+\delta}{d_j} & \text{ if } \frac{1}{d_j+1} \leq \alpha \leq \frac{1}{d_i+1} \\
       \frac{2\alpha}{1-\alpha} +\frac{\delta}{d_j} & \text{ if } 0 \leq \alpha \leq \frac{1}{d_j+1} \text{ and } d_i \geq 2 \text{, and}\\
   \end{cases}
\end{equation}
\begin{equation*}\label{equ:CurvatureRangeBdd}
    \kappa^{\alpha}(x_i,x_j) = 2\alpha \quad\text{ if } 0\leq \alpha \leq \frac{1}{d_j+1} \text{ and } d_i = 1.
\end{equation*}
Moreover,
\begin{itemize}
    \item Equality in \eqref{equ:CurvatureRangeLowerBdd} holds if and only if the edge $\{x_i,x_j\}$ does not lie in any induced cycle of length $3$, $4$, or $5$ in $G$.
    \item Equality in \eqref{equ:CurvatureRangeUpperBdd} holds if and only if there is a transport plan such that any mass in $G$ is transported by path of length $1$ in $G$. 
    Especially, if $d_i=d_j$, then equality in \eqref{equ:CurvatureRangeUpperBdd} holds if and only if {there exists a perfect matching between $N_G(x_i) \setminus N_G[x_j]$ and $N_G(x_j) \setminus N_G[x_i]$.}
\end{itemize}
\end{theorem}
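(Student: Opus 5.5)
The plan is to translate Lemma \ref{lem:Wrange} into curvature bounds. Since $\operatorname{dist}_G(x_i,x_j)=1$, we have $\kappa^\alpha(x_i,x_j)=1-W_1(m_i^\alpha,m_j^\alpha)$. Reversing the inequalities of Lemma \ref{lem:Wrange} gives
\[
1-\tfrac32\|m_i^\alpha-m_j^\alpha\|_1+\Delta_{ij}(\alpha)\le\kappa^\alpha(x_i,x_j)\le 1-\tfrac12\|m_i^\alpha-m_j^\alpha\|_1 .
\]
What remains is to compute $\|m_i^\alpha-m_j^\alpha\|_1$ explicitly in each $\alpha$-regime and divide by $1-\alpha$.

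Write $A=N(x_i)\setminus N[x_j]$, $B=N(x_j)\setminus N[x_i]$ and $\Delta=N(x_i)\cap N(x_j)$, so that $|A|=d_i-1-\delta$ and $|B|=d_j-1-\delta$. The contributions to $\|m_i^\alpha-m_j^\alpha\|_1$ are as follows.
\begin{itemize}
\item $A$ contributes $|A|\frac{1-\alpha}{d_i}$ and $B$ contributes $|B|\frac{1-\alpha}{d_j}$.
\item $\Delta$ contributes $\delta(1-\alpha)\bigl(\frac1{d_i}-\frac1{d_j}\bigr)$.
\item $x_i$ and $x_j$ contribute $\bigl|\alpha-\frac{1-\alpha}{d_j}\bigr|+\bigl|\alpha-\frac{1-\alpha}{d_i}\bigr|$.
\end{itemize}
The sign pattern of the last two absolute values is exactly what produces the three regimes $\alpha\ge\frac1{d_i+1}$, $\frac1{d_j+1}\le\alpha\le\frac1{d_i+1}$ and $\alpha\le\frac1{d_j+1}$. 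In each regime I will expand, simplify and divide by $1-\alpha$.

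For the upper bound, I expect for instance in the first regime $\frac12\|\cdot\|_1=\alpha$-free terms plus $\alpha-\ldots$. This should give $1-\frac12\|\cdot\|_1=(1-\alpha)\bigl(\frac{1+\delta}{d_i}+\frac1{d_j}\bigr)$ after the $\delta$ terms from $A$ and $\Delta$ combine. The other two regimes should produce the extra $\frac{\alpha}{1-\alpha}$ and $\frac{2\alpha}{1-\alpha}$ terms from the signs of $\alpha-\frac{1-\alpha}{d}$.

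For the lower bound, I add $\Delta_{ij}(\alpha)$ and check that in every regime allowed by \eqref{equ:CurvatureRangeLowerBdd} the result collapses to $-2+\frac2{d_i}+\frac{2+3\delta}{d_j}$ times $(1-\alpha)$. This regime-independence is the point of the choice of $\Delta_{ij}$, and confirming it is the main bookkeeping step. I expect the $\alpha$ terms to cancel exactly against those in $\Delta_{ij}(\alpha)$. There is one subtlety: in the middle regime the stated $\Delta_{ij}$ involves $\frac{2(1-\alpha)}{d_i}$, and I must check the cancellation carefully there. If it fails, I would re-derive the bound directly from the worst-case plan (all of $A\to B$ at distance 3, and $\Delta$ moved at distance 1).

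The degenerate case $d_i=1$ with $\alpha\le\frac1{d_j+1}$ is handled separately. Here $x_i$ is a leaf, so $A=\Delta=\emptyset$. Direct computation (or the function $f$ equal to $1$ at $x_j$ used in Lemma \ref{lem:Wrange}) gives $W_1=1-2\alpha$, hence $\kappa^\alpha=2\alpha$.

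The equality characterizations are inherited verbatim from the equality clauses of Lemma \ref{lem:Wrange}, because the passage from $W_1$ to $\kappa^\alpha$ is an affine bijection. For the special statement when $d_i=d_j$, the equality condition is that all mass moves along distance-$1$ paths. With equal degrees the masses on $\Delta$ and the mass at $x_i$ versus $x_j$ either cancel or move along the edge $x_ix_j$. So the condition reduces to moving uniform mass $\frac{1-\alpha}{d}$ from $A$ to $B$ along edges of $G$. By the Birkhoff/König argument used in the proof of Theorem \ref{thm:regularalg} (equivalently Theorem \ref{thm:main} with $|M_1|=|A|$), this is possible exactly when $E_1$ contains a perfect matching of $K$. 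I expect this integrality step, passing from fractional transport on distance-$1$ pairs to an actual perfect matching, to be the main obstacle. I would handle it by Hall's theorem applied to the fractional plan, since a uniform fractional perfect matching forces Hall's condition.
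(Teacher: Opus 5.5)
Your route is the paper's own. Its proof of Theorem~\ref{thm:CurvatureRange} consists solely of computing $\|m_i^\alpha-m_j^\alpha\|_1$ in the three regimes and substituting into Lemma~\ref{lem:Wrange}. The algebra does collapse to $-2+\frac{2}{d_i}+\frac{2+3\delta}{d_j}$ in every regime, as you expect. The middle regime you worried about is harmless once you use the correct value $\|m_i^\alpha-m_j^\alpha\|_1=2(1-\alpha)\bigl(1-\frac{1+\delta}{d_j}\bigr)$ there.

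The genuine gap, which the paper shares, is the step ``reverse the inequalities of Lemma~\ref{lem:Wrange}'' when $\alpha\le\frac{1}{d_j+1}$ and $d_i\ge 2$. There the bound $W_1\le\frac32\|m_i^\alpha-m_j^\alpha\|_1-\Delta_{ij}(\alpha)$ amounts to moving at least $\frac{1-\alpha}{d_i}+\frac{1-\alpha}{d_j}-2\alpha$ units at distance $1$. This is guaranteed if $A=N_G(x_i)\setminus N_G[x_j]\neq\emptyset$: one vertex of $A$ fills the deficit $\frac{1-\alpha}{d_j}-\alpha$ at $x_i$ while $x_j$ unloads its surplus into $B$. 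When $A=\emptyset$, however, the required amount can exceed the total mass to be moved, and then the statement itself fails. For the triangle $K_3$ at $\alpha=0$ one has $\|m_i^0-m_j^0\|_1=1$ and $\Delta_{ij}(0)=2$, so the lemma would give $W_1\le-\frac12$. Correspondingly \eqref{equ:CurvatureRangeLowerBdd} asserts $\kappa^0\ge\frac32$, which is impossible since $\kappa^0\le 1$; in fact $\kappa^0=\frac12$, the value of the upper bound. So no proof of this case exists as stated. The explicit plan in which one $a\in A$ feeds $x_i$, $x_j$ feeds $B$, common neighbours move at distance $\le 2$ and the rest at distance $\le 3$ proves the lower bound only under the extra hypothesis $A\neq\emptyset$.

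Nor can the equality clauses be inherited verbatim; there are three problems.
\begin{enumerate}
\item[(i)] The characterization of equality in \eqref{equ:CurvatureRangeLowerBdd} is false in the ``only if'' direction. $K_3$ at $\alpha=\frac12$ gives $\kappa^\alpha/(1-\alpha)=\frac32=-2+1+\frac52$, although the edge lies in a triangle. When $d_i=d_j$, common neighbours carry no mass and cannot spoil the worst-case plan.
\item[(ii)] For $\alpha\ge\frac{1}{d_i+1}$ your computation actually yields $1-\frac12\|m_i^\alpha-m_j^\alpha\|_1=(1-\alpha)\bigl(\frac{1}{d_i}+\frac{1+\delta}{d_j}\bigr)$, not $(1-\alpha)\bigl(\frac{1+\delta}{d_i}+\frac{1}{d_j}\bigr)$, because the $\delta/d_i$ terms from $A$ and $\Delta$ cancel. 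The stated inequality still follows from $d_i\le d_j$. For the equality clause, however, you must add that when $d_i<d_j$ and $\alpha<1$ no distance-$1$ plan exists. Indeed $x_j$ is the only deficient neighbour of $x_i$, and its deficit $\alpha-\frac{1-\alpha}{d_i}$ is smaller than the surplus $\alpha-\frac{1-\alpha}{d_j}$ at $x_i$. Hence both sides of the equivalence fail.
\item[(iii)] For $d_i=d_j=d$ and $\alpha<\frac{1}{d+1}$, your reduction to moving uniform mass from $A$ to $B$ is wrong. Here $x_i$ is a sink and $x_j$ a source, so a distance-$1$ plan may route $A\to x_i$ and $x_j\to B$. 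Hall's argument then only yields $|N_B(S)|\ge|S|-1+\frac{\alpha d}{1-\alpha}$ for $S\subseteq A$. This gives a perfect matching for $\alpha>0$, but not at $\alpha=0$, where the clause is false. For the path with edges $ax_i$, $x_ix_j$, $x_jb$ one has $\kappa^0(x_i,x_j)=0$, equal to the bound $\frac{2\alpha}{1-\alpha}+\frac{\delta}{d_j}=0$, although $a\not\sim b$.
\end{enumerate}
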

\begin{proof}
Let $m_i^\alpha := m_{x_i}^{\alpha}$ and $m_j^{\alpha} := m_{x_j}^{\alpha}$. One can compute $ \| m_i^{\alpha} - m_j^\alpha \|_1 := \sum_{v \in V} |m_i^\alpha (v) - m_j^\alpha (v) |$ in each case:
\begin{equation}\label{equ:1norm}
\|m^\alpha_i-m^\alpha_j\|_1 
= 
\begin{cases}
    2- 2(1-\alpha)\left(\frac{1}{d_i}+\frac{\delta+1}{d_j}\right)& \text{ if } \frac{1}{d_i+1} \leq \alpha \leq 1\\
    2(1-\alpha)\left(1+\frac{-\delta+1}{d_j}\right)& \text{ if } \frac{1}{d_j+1} \leq \alpha \leq \frac{1}{d_i+1}\\
    2-4\alpha-2(1-\alpha)\frac{\delta}{d_j}& \text{ if } 0 \leq \alpha \leq \frac{1}{d_j+1}.\\
\end{cases}
\end{equation}
Applying Lemma \ref{lem:Wrange} together with \eqref{equ:1norm} yields the desired bounds for $\kappa^{\alpha}(x_i,x_j)$.
\end{proof}

We emphasize that if there is a transportation plan such that any mass in $G$ is transported along paths of length $1$ in $G$,
then the Lin--Lu--Yau Ricci curvature $\kappa^{\mathrm{LLY}}(v_i,v_j)$ is always positive. 

\begin{corollary}\label{alg:positive ricci for transport-max}
For $\{x_i, x_j \} \in E$, let $m_i^\alpha := m_{x_i}^{\alpha}$, $m_j^{\alpha} := m_{x_j}^{\alpha}$, and 
$
\|m_i^\alpha - m_j^\alpha \|_1 := \sum_{v \in V} |m_i^\alpha (v) - m_j^\alpha (v) |.
$
If $W_1(m^\alpha_i,m^\alpha_j) = \tfrac{1}{2}\,\|m^\alpha_i - m^\alpha_j\|_1$, then $\kappa^{\mathrm{LLY}}(x_i,x_j) > 0.$
\end{corollary}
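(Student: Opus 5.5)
The plan is to compute $\kappa^\alpha(x_i,x_j)$ exactly from the hypothesis and then pass to the limit $\alpha\to1^-$. I read the hypothesis as holding for all $\alpha$ sufficiently close to $1$, say for $\alpha\in[\tfrac{1}{d_i+1},1)$, since $\kappa^{\mathrm{LLY}}$ only depends on this range. Here $d_i:=\deg_G(x_i)\le d_j:=\deg_G(x_j)$ as in Theorem~\ref{thm:CurvatureRange}, after relabelling if necessary, and $\delta:=|N_G(x_i)\cap N_G(x_j)|$.

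The first step is to evaluate the curvature in this range. Since $\operatorname{dist}_G(x_i,x_j)=1$, we have $\kappa^\alpha(x_i,x_j)=1-W_1(m_i^\alpha,m_j^\alpha)=1-\tfrac12\|m_i^\alpha-m_j^\alpha\|_1$. By \eqref{equ:1norm} in the proof of Theorem~\ref{thm:CurvatureRange}, for $\tfrac{1}{d_i+1}\le\alpha\le1$,
\[
\tfrac12\|m_i^\alpha-m_j^\alpha\|_1 = 1-(1-\alpha)\Bigl(\tfrac{1}{d_i}+\tfrac{\delta+1}{d_j}\Bigr).
\]
Therefore
\[
\frac{\kappa^\alpha(x_i,x_j)}{1-\alpha}=\frac{1}{d_i}+\frac{1+\delta}{d_j}.
\]
This is exactly the equality case of the upper bound \eqref{equ:CurvatureRangeUpperBdd}, consistent with the characterization in Theorem~\ref{thm:CurvatureRange}. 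The right-hand side is independent of $\alpha$, so letting $\alpha\to1^-$ in Definition~\ref{LLY Ricci} gives
\[
\kappa^{\mathrm{LLY}}(x_i,x_j)=\frac{1}{d_i}+\frac{1+\delta}{d_j}\ge\frac{1}{d_i}>0.
\]

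The main obstacle is justifying that the hypothesis holds on a whole interval $(1-\varepsilon,1)$ if it is stated only for a single $\alpha_0\ge\tfrac1{d_i+1}$. To handle this, I would take a plan $P_0$ for $\alpha_0$ that moves all mass along edges of $G$ (which exists by the equality characterization in Lemma~\ref{lem:Wrange}) and rescale it. Away from $x_i,x_j$, the signed difference $m_i^\alpha-m_j^\alpha$ equals $(1-\alpha)$ times a fixed vector. At $x_i$ the surplus is $\alpha-\tfrac{1-\alpha}{d_i}$, and the mass $x_i$ sends to $x_j$ can absorb the affine part. So multiplying the transfers of $P_0$ between neighbours by $\tfrac{1-\alpha}{1-\alpha_0}$ and adjusting the $x_i\to x_j$ transfer should give a length-one plan for every $\alpha\ge\tfrac{1}{d_i+1}$. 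This would reduce the general case to the computation above.

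A backup route avoids this step entirely. By concavity of $\kappa^\alpha$ in $\alpha$ (Lin--Lu--Yau), with $\kappa^1=0$, the quotient $\kappa^\alpha/(1-\alpha)$ is non-decreasing in $\alpha$ on $[0,1)$. Hence $\kappa^{\mathrm{LLY}}(x_i,x_j)\ge\kappa^{\alpha_0}(x_i,x_j)/(1-\alpha_0)$. By the computation above, this lower bound is positive, in every range of $\alpha_0$ where \eqref{equ:1norm} makes $1-\tfrac12\|m_i^{\alpha_0}-m_j^{\alpha_0}\|_1$ positive.
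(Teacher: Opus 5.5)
Your proof is correct, and its core is the paper's: the hypothesis turns $\kappa^\alpha(x_i,x_j)$ into $1-\tfrac12\|m_i^\alpha-m_j^\alpha\|_1$, and one lets $\alpha\to1^-$. The difference is in the last step, where your version is more complete. The paper only records $\|m_i^\alpha-m_j^\alpha\|_1<2$, i.e.\ $\kappa^\alpha(x_i,x_j)>0$ for each $\alpha$. Since this numerator also tends to $0$, that alone does not make the limit of the quotient by $1-\alpha$ positive. Your exact evaluation $\kappa^\alpha(x_i,x_j)/(1-\alpha)=\frac1{d_i}+\frac{1+\delta}{d_j}$ on $[\frac1{d_i+1},1)$ supplies exactly the lower bound of order $1-\alpha$ that is needed.

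Your monotonicity route (concavity of $\alpha\mapsto\kappa^\alpha$ together with $\kappa^1(x_i,x_j)=0$) is genuinely different and proves more, since it uses the hypothesis at a single $\alpha_0$ only. To close it, use $1-\tfrac12\|m_i^{\alpha_0}-m_j^{\alpha_0}\|_1=\sum_v\min\{m_i^{\alpha_0}(v),m_j^{\alpha_0}(v)\}\ge\min\{\alpha_0,\frac{1-\alpha_0}{d_j}\}>0$, valid for every $\alpha_0\in(0,1)$. Do not rely on the second line of \eqref{equ:1norm}, which has a sign slip; it should read $2(1-\alpha)\bigl(1-\frac{1+\delta}{d_j}\bigr)$. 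The endpoints are genuine exceptions. At $\alpha_0=1$ the hypothesis always holds. At $\alpha_0=0$, an edge of $C_n$ with $n\ge6$ satisfies it while $\kappa^{\mathrm{LLY}}=0$.

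There are two small slips. First, the surplus at $x_i$ is $\alpha-\frac{1-\alpha}{d_j}$; the quantity $\alpha-\frac{1-\alpha}{d_i}$ is the deficit at $x_j$. Second, your value agrees with the printed bound $\frac{1+\delta}{d_i}+\frac1{d_j}$ only when $d_i=d_j$.

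That equality is in fact forced. For $\alpha\ge\frac1{d_i+1}$, the only neighbour of $x_i$ with a deficit is $x_j$. So a length-one plan needs $\alpha-\frac{1-\alpha}{d_j}\le\alpha-\frac{1-\alpha}{d_i}$, i.e.\ $d_i=d_j$. The plan then splits into the transfer $x_i\to x_j$ and a part between the non-common neighbours that is proportional to $1-\alpha$. This makes your rescaling step immediate.
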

\begin{proof}
One has $\|m^\alpha_i - m^\alpha_j\|_1 \le \|m^\alpha_i\|_1 + \|m^\alpha_j\|_1 = 2$ with equality if and only if
\[
|m^\alpha_i(v)-m^\alpha_j(v)| 
= |m^\alpha_i(v)| + |m^\alpha_j(v)|
\quad \text{for all} ~ v \in V.
\]
However, the equality condition always fails for $v=x_i$ and $v=x_j$, and hence $ \|m^\alpha_i - m^\alpha_j\|_1 < 2$.
Therefore,
\[
\kappa^{\mathrm{LLY}}(x_i,x_j)
= \lim_{\alpha\to1^-} \frac{1-\tfrac{1}{2}\|m^\alpha_i-m^\alpha_j\|_1}{1-\alpha}
> 0.
\]  
\end{proof}
In addition, we provide a condition on the number of common neighbors that guarantees non-negative Lin--Lu--Yau Ricci curvature.

\begin{proof}[Proof of Theorem \ref{thm:LLY_nonnegative}]
We construct a feasible transportation plan $\pi$ from $m^\alpha_x$ to $m^\alpha_y$ as follows:
    \begin{itemize}
        \item Transport a mass of $\alpha - \frac{1-\alpha}{d_x}$ from $x$ to $y$ along the edge $\{x, y\}$.          
        \item Transport a mass of $\frac{1-\alpha}{d_x}-\frac{1-\alpha}{d_y}$ from each vertex in $N_G[x]\cap N_G(y)$ to $N_G(y)\setminus N_G[x]$ via paths of length $2$.
        \item Transport a mass of $\frac{1-\alpha}{d_x}$ from each vertex in $N_G(x)\setminus N_G[y]$ to $N_G(y)\setminus N_G[x]$ via paths of length $3$.
    \end{itemize}
    The total cost of this transportation plan is given by
$$
    \alpha + (1-\alpha)\left(3 - \frac{\delta+2}{d_x} - \frac{2\delta+2}{d_y}\right).
$$
Under the assumptions that $E(\Delta, B) = \emptyset$ and $\operatorname{dist}_G(u,w) = 3$ for all $u \in A$ and $w \in B$, we verify the optimality of this plan by defining a function $f: V_+ \sqcup V_- \to \mathbb{R}$ as follows:
    $$
    f(v) = \begin{cases}
        2 & \text{if } v \in N_G(x)\setminus N_G[y], \\
        1 & \text{if } v \in N_G[x]\cap N_G(y), \\
        0 & \text{if } v = y, \\
        -1 & \text{if } v \in N_G(y)\setminus N_G[x].
    \end{cases}
    $$
Since $f$ is a $1$-Lipschitz function on $V_{+} \sqcup V_{-}$ with respect to $\operatorname{dist}_G$, 
% and the cost is $\sum_{v\in V} f(v)\cdot(m^\alpha_x(v) - m^\alpha_y(v))$. 
by \cite[Theorem 1]{Mcshane}, it follows that $W_1(m^\alpha_x,m^\alpha_y) = \operatorname{Cost}(\pi)$ in this case.

 A direct computation shows that $\delta \geq \frac{2d_x d_y - 2d_x - 2d_y}{2d_x+d_y}$ implies $\kappa^{\textup{LLY}}(x,y) \geq 0$. 
Furthermore, under the assumption $\delta \geq \frac{2d_x d_y - 2d_x - 2d_y}{2d_x+d_y}$, we can establish the exact necessary and sufficient conditions for $\kappa^{\textup{LLY}}(x,y) = 0$ by constructing that an alternative transportation plan $\pi'$ with a strictly lower cost can be constructed whenever any of these conditions fail.
\end{proof}

\section{Ricci Curvature of Graph Joins}\label{sec:join}
In this section, we apply our main results to graphs with certain prescribed structures. 
Let $G_1=(V_1,E_1)$ and $G_2=(V_2,E_2)$ be disjoint graphs. 
The \textit{graph join} $G_1\vee G_2$ is the graph with vertex set $V_1\cup V_2$ and edge set 
$E_1\cup E_2\cup \{ \{u,v \} \mid u\in V_1,\ v\in V_2 \}.$
For $l$ pairwise disjoint graphs $G_1, \dots, G_l$, this operation naturally extends to the join $G_1\vee\cdots\vee G_l$ of multiple graphs.

The join operation naturally yields a graph of diameter at most two. 
This property facilitates an exact determination of the $W_1$-distance and the subsequent analysis of the $\alpha$-Ricci curvature.

\begin{proposition}\label{prop:graphjoins}
Let $G := G_1 \vee \dots \vee G_l$ be the join of finite graphs  $G_1, \dots, G_l$. 
For each $i \in \{1, 2, \dots, l\}$, let $n_i := |V(G_i)|$ and $n := |V(G)| = n_1 + n_2 + \dots + n_l$. 
For distinct indices $i,j$, let $x_i \in V(G_i)$ and $x_j \in V(G_j)$ satisfy $d_i := \deg_G(x_i) \le d_j := \deg_G(x_j)$, and denote the local degree by $d_i' := \deg_{G_i}(x_i)$.
Then
\[
\frac{\kappa^{\alpha}(x_i, x_j)}{1-\alpha}
\begin{cases}
    \geq 1 - \frac{n - d_i - d'_i - 2}{d_j} - \frac{d'_i}{d_i} & \text{ if } \frac{1}{d_j+1} \leq \alpha \leq 1 \\
    \geq 1- \frac{d'_i-1}{d_i} - \frac{n-d_i-d'_i-1}{d_j} & \text{ if } \max\{0, \frac{d_i-d'_i(d_j-d_i)}{d_id_j+d_i-d'_i(d_j-d_i)}\} \leq \alpha \leq \frac{1}{d_j+1}\\
    = \frac{1+\alpha}{1-\alpha}- \frac{n-d_i}{d_j}& \text{ if } 0 \leq \alpha \leq \max\{0, \frac{d_i-d'_i(d_j-d_i)}{d_id_j+d_i-d'_i(d_j-d_i)}\}\\
\end{cases}
\]
Equality holds if and only if the induced subgraphs $G[N_{G_{i}}[x_{i}]]$ and $G[V_{i} \setminus N_{G_{i}}[x_{i}]]$ are disjoint.
In particular, $\kappa^{\mathrm{LLY}}(x_{i}, x_{j}) > 0$ whenever $d_i \leq d_j \leq 2d_i$ or $d'_i < \frac{d_{i}+d_{j}-n+2}{d_j/d_i-1}$.
\end{proposition}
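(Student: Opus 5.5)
Throughout, $\operatorname{diam}(G)\le 2$, so every transport path has length $1$ or $2$. The plan is therefore to sandwich $W_1(m^\alpha_{x_i},m^\alpha_{x_j})$ between the lower bound $\tfrac12\|m^\alpha_{x_i}-m^\alpha_{x_j}\|_1$ of Lemma~\ref{lem:Wrange} and the cost of an explicit plan. That plan uses length-$2$ paths only where they are forced, and Theorem~\ref{prop:TransportationPlan} certifies it when the equality condition holds.

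\textbf{Local structure.} First I would describe the neighborhoods. We have $N_G(x_i)=N_{G_i}(x_i)\cup(V\setminus V_i)$, so $d_i=d_i'+n-n_i$, and similarly for $x_j$. This gives
\[
A=N_G(x_i)\setminus N_G[x_j]=V_j\setminus N_{G_j}[x_j],\qquad
B=N_G(x_j)\setminus N_G[x_i]=V_i\setminus N_{G_i}[x_i].
\]
The common neighborhood is $\Delta=N_{G_i}(x_i)\cup N_{G_j}(x_j)\cup\bigcup_{k\ne i,j}V_k$, so $\delta=d_i+d_j-n+2-$ (corrections already encoded by $d_i'$), and $|B|=n-d_i-d'_i-1$ after rewriting. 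Since $d_i\le d_j$, the sign pattern of $m^\alpha_{x_i}-m^\alpha_{x_j}$ is as follows. Every vertex of $\Delta\cup A$ is a surplus vertex. Every vertex of $B$ is a deficit vertex. The vertices $x_i$ and $x_j$ change sign at $\alpha=1/(d_i+1)$ and $\alpha=1/(d_j+1)$. This yields $\|m^\alpha_{x_i}-m^\alpha_{x_j}\|_1$ by \eqref{equ:1norm}.

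\textbf{Distances.} All pairs between $V_i$ and $V\setminus V_i$ are adjacent. So $A\cup\bigcup_{k\neq i}V_k$ is at distance $1$ from $B$, and $x_j$ is adjacent to everything except $A$. The only pairs that can be at distance $2$ are of two kinds: $x_i$ with $B$, which is always distance $2$, and vertices of $N_{G_i}(x_i)$ with vertices of $B$ that are non-adjacent inside $G_i$.

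\textbf{The plan.} I would build the transport greedily, as in Algorithm~\ref{alg:general}.
\begin{enumerate}
\item Send mass from $A$ and from $\Delta\setminus N_{G_i}(x_i)$ to $B$ along length-$1$ edges.
\item Send the surplus at $x_i$ (when $\alpha\ge 1/(d_j+1)$) to $x_j$ directly.
\item Route whatever deficit of $B$ remains, including the deficit created by $N_{G_i}(x_i)$-surplus, along length-$2$ paths.
\end{enumerate}
Counting the mass forced onto length-$2$ paths in each of the three $\alpha$-ranges should give exactly the three stated expressions. In the smallest range, $x_i$ is itself a deficit vertex whose demand exceeds the length-$1$ supply; the threshold $\frac{d_i-d_i'(d_j-d_i)}{d_id_j+d_i-d_i'(d_j-d_i)}$ is where the total surplus of $A\cup\bigcup_{k\ne i,j}V_k$ ceases to cover the deficit of $B$ at distance $1$.

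\textbf{Lower bound and equality.} For the equality case, assume there are no edges between $N_{G_i}[x_i]$ and $V_i\setminus N_{G_i}[x_i]$, which is how I read ``disjoint''. Then I would define $f$ on $V_+\sqcup V_-$ as follows:
\begin{itemize}
\item $f=2$ on $N_{G_i}[x_i]$ (on $x_i$ only when $x_i$ is a surplus vertex);
\item $f=1$ on the remaining surplus vertices;
\item $f=0$ on $B\cup\{x_j\}$, with the obvious adjustment in the lowest range.
\end{itemize}
I would check that $f$ is $1$-Lipschitz: pairs with $f$-difference $2$ lie at distance exactly $2$ precisely because of the disjointness hypothesis. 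I would also check that $f$ satisfies \eqref{pseudo lipschitz} on the edges used. Theorem~\ref{prop:TransportationPlan} then gives equality. Conversely, an edge between $N_{G_i}[x_i]$ and $B$ lets one reroute a positive amount of mass from a length-$2$ path to a length-$1$ path, which strictly lowers the cost. The final positivity claims follow by letting $\alpha\to1^-$ in the first case and solving the resulting inequality for $d_i'$; the condition $d_j\le 2d_i$ handles the bound $d_i'\le d_i$.

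\textbf{Main obstacle.} I expect the hardest part to be the careful bookkeeping in the middle and lowest $\alpha$-ranges, that is, identifying exactly which mass is forced onto length-$2$ paths and locating the threshold for $\alpha$. There I would first work out the regime where $x_i$ is a deficit vertex, comparing the total distance-$1$ supply to $B\cup\{x_i\}$ against its demand.
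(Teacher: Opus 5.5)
Your route is the same as the paper's: build an explicit plan, use the bound $\tfrac12\|m^\alpha_{x_i}-m^\alpha_{x_j}\|_1$ below the threshold, certify optimality with a Lipschitz function in the disjoint case, and argue the converse by rerouting. However, the certificate as written fails, and the middle-range expression cannot be obtained because it is false.

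\textbf{The certificate.} $x_j$ is adjacent to $x_i$ and to every vertex of $N_{G_i}(x_i)$. So $f(x_j)=0$ next to $f=2$ on $N_{G_i}[x_i]$ is not $1$-Lipschitz. It also breaks the tightness condition $f(u)-f(v)=\operatorname{dist}_G(u,v)$ on edges that carry mass: $N_{G_i}[x_i]\to x_j$ when $\alpha\ge\frac1{d_i+1}$, and $x_j\to B$ when $\alpha<\frac1{d_i+1}$. Tightness forces $f(x_j)=1$. The correct certificate is $f=2$ on $N_{G_i}[x_i]$, $f=0$ on $B$, and $f=1$ elsewhere, which is the paper's function shifted by $1$. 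Several counts also need repair:
\begin{itemize}
\item $|B|=n_i-d_i'-1=n-d_i-1$, since $n_i=n-d_i+d_i'$; and $\delta=d_i+d_j-n$.
\item Pairs in $A\times\{x_j\}$ are also at distance $2$.
\item Step 2 can send only $\alpha-\frac{1-\alpha}{d_i}$ to $x_j$, and nothing once $\alpha<\frac1{d_i+1}$.
\end{itemize}
With these fixes the top range gives $W_1=\alpha+(1-\alpha)\bigl(\frac{d_i'}{d_i}+\frac{n-d_i-d_i'-2}{d_j}\bigr)$ on all of $[\frac1{d_j+1},1]$, as stated. For positivity under $d_j\le 2d_i$ you need $d_i'(\frac{d_j}{d_i}-1)\le d_i'\le\delta<\delta+2$. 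This uses $N_{G_i}(x_i)\subseteq N_G(x_i)\cap N_G(x_j)$; the bound $d_i'\le d_i$ alone does not suffice.

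\textbf{The middle range.} You expect the counting to give ``exactly the three stated expressions''. It cannot, because the middle-range bound is false. The threshold is where the surplus $d_i'(1-\alpha)(\frac1{d_i}-\frac1{d_j})$ of $N_{G_i}(x_i)$ equals the deficit $\frac{1-\alpha}{d_j}-\alpha$ of $x_i$. Above it, in the disjoint case, exactly the difference must travel on length-$2$ paths. The certificate above, with $f(x_i)=1$, then gives
\[
\frac{\kappa^\alpha(x_i,x_j)}{1-\alpha}=\frac{1}{1-\alpha}-\frac{d_i'}{d_i}-\frac{n-d_i-d_i'-1}{d_j}.
\]
This matches the top-range value at $\frac1{d_j+1}$ and the bottom-range value at the threshold. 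Since $\frac1{1-\alpha}\le 1+\frac1{d_j}$ on this range, it lies below the stated $1-\frac{d_i'-1}{d_i}-\frac{n-d_i-d_i'-1}{d_j}$ by at least $\frac1{d_i}-\frac1{d_j}$.

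Concretely, let $G_i$ have vertices $x_i,a,b$ and the single edge $x_ia$, and let $G_j=\{x_j\}$. Then $d_i=2$, $d_j=3$, $d_i'=1$, $n=4$, and the middle range is $[\frac17,\frac14]$. At $\alpha=\frac15$, send $\frac1{15},\frac1{15},\frac3{15}$ along $a\to x_i$, $a\to b$, $x_j\to b$. The function $f(a)=2$, $f(x_i)=f(x_j)=1$, $f(b)=0$ certifies this plan, so $W_1=\frac25$. Hence $\frac{\kappa^\alpha}{1-\alpha}=\frac34$, whereas the stated bound is $1$.

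Similarly, when the threshold expression is negative, the bottom-range equality fails at $\alpha=0$ in the disjoint case. The paper works out only the top range and asserts the rest ``by the same argument'', which is how these errors slipped through.

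Once corrected, your argument proves the top and bottom ranges and the corrected middle formula above. One more caveat: the rerouting converse needs positive surplus on $N_{G_i}(x_i)$, i.e.\ $d_i<d_j$. When $d_i=d_j$, the top-range bound is attained regardless of disjointness, as in Corollary~\ref{cor:specialjoingraphs}(1).
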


\begin{proof}
The proof follows the methodology based on Algorithm \ref{alg:general}. 
Partition the vertex set as
\[
V = \{x_{i}\} \sqcup \{x_{j}\} \sqcup N_{G_{i}}(x_{i}) \sqcup (V_{i} \setminus N_{G_{i}}[x_{i}]) \sqcup N_{G_{j}}(x_{j}) \sqcup (V_{j} \setminus N_{G_{j}}[x_{j}]) \sqcup (V \setminus (V_{i} \cup V_{j})).
\]
Let $m_i^{\alpha} := m_{x_i}^{\alpha}$ and $m_j^{\alpha} := m_{x_j}^{\alpha}$.
Then $m = m_{i}^{\alpha} - m_{j}^{\alpha}$ is given by
\begin{equation*}
\begin{aligned}
m(v) = \begin{cases}
 \alpha-\tfrac{1-\alpha}{d_{j}} & \text{if} \; v  = x_i    \\
    \tfrac{1-\alpha}{d_{i}}-\alpha & \text{if} \; v = x_j  \\
    \tfrac{1-\alpha}{d_{i}}-\tfrac{1-\alpha}{d_{j}} & \text{if} \; v \in N_{G_i} (x_i) \sqcup N_{G_j} (x_j) \sqcup (V \setminus (V_i \sqcup V_j ) \\
    -\tfrac{1-\alpha}{d_{j}}, & \text{if} \; v \in V_i \setminus N_{G_i}[x_i] \\
    \tfrac{1-\alpha}{d_{i}} & \text{if} \; v \in V_j \setminus N_{G_j} [x_j]. 
          \end{cases}
\end{aligned}
\end{equation*}

There are the four cases according to the value of $\alpha$: 
$\frac{1}{d_i+1} \leq \alpha \leq 1$, $\frac{1}{d_j+1} \leq \alpha \leq \frac{1}{d_i+1}$, $\max\!\left\{0, \frac{d_i - d'_i(d_j-d_i)}{d_i d_j + d_i - d'_i(d_j-d_i)}\right\} \leq \alpha \leq \frac{1}{d_j+1}$, and $0 \leq \alpha \leq \max\!\left\{0, \frac{d_i - d'_i(d_j-d_i)}{d_i d_j + d_i - d'_i(d_j-d_i)}\right\}$.

We present the computation for the first case, namely 
$\frac{1}{d_i+1} \leq \alpha \leq 1$; 
the remaining cases follow by the same argument.
Note that the values of $W_1(m_i^\alpha,m_j^\alpha)$ in the first two cases coincide, and hence these two cases can be merged.

To obtain the lower bound on {$\kappa^\alpha(x_i,x_j)$}, we assume there is no edge between $N_{G_i}(x_i)$ and $V_i \setminus N_{G_i}[x_i]$. {When $s = 1$ in Algorithm \ref{alg:general}}, transport an amount $\alpha - \frac{1-\alpha}{d_j}$ from $x_i$ to $x_j$, and send $\frac{1-\alpha}{d_i}$ from each vertex in $V_j \setminus N_{G_j}[x_j]$, as well as $\frac{1-\alpha}{d_i}-\frac{1-\alpha}{d_j}$ from each vertex in $N_{G_j}(x_j)$ and $V\setminus (V_i\cup V_j)$, to vertices in $V_i \setminus N_{G_i}[x_i]$.
When $s = 2$ in Algorithm \ref{alg:general}, transport the remaining mass $\frac{1-\alpha}{d_{i}} - \frac{1-\alpha}{d_{j}}$ from each vertex in $N_{G_{i}}[x_i]$ to the vertices in  $V_{i} \setminus N_{G_{i}}[x_{i}]$.

This transportation plan yields the upper bound
$$
W_{1}(m_{i}^{\alpha}, m_{j}^{\alpha}) \leq \alpha + \left(\frac{n - d_{i} - d'_{i} - 2}{d_j} + \frac{d'_{i}}{d_{i}}\right)(1-\alpha).
$$
This bound is tight if and only if the induced subgraphs $G[N_{G_{i}}[x_{i}]]$ and $G[V_{i} \setminus N_{G_{i}}[x_{i}]]$ are disjoint.
Indeed, when this holds, optimality is certified by the $1$-Lipschitz function $f:V_-\sqcup V_+ \to \mathbb{R}$ defined as
$$
f(v) = \begin{cases} 
    1 & \text{if } v \in N_{G_{i}}[x_{i}], \\ 
    -1 & \text{if } v \in V_{i} \setminus N_{G_{i}}[x_{i}], \\ 
    0 & \text{otherwise.} 
\end{cases}
$$
Conversely, the existence of an edge between $N_{G_i}(x_i)$ and $V_{i} \setminus N_{G_{i}}[x_{i}]$ enables mass transport along a strictly shorter path, and therefore strictly reduces the total transportation cost.
\end{proof}

\begin{corollary}\label{cor:specialjoingraphs}
With the same assumptions as in Proposition \ref{prop:graphjoins}, the following hold:
\begin{enumerate}
    \item\label{cor:specialjoingraphs1} If $d_i = d_j =d$, then 
    \[
    \frac{\kappa^\alpha(x_i,x_j)}{1-\alpha} =
    \begin{cases}
        2- \frac{n-2}{d} & \text{ if } \frac{1}{d+1}\leq \alpha \leq 1 \\
        \frac{2}{1-\alpha} - \frac{n}{d} & \text{ if } 0 \leq \alpha \leq \frac{1}{d+1}.
    \end{cases}
    \]
    \item\label{cor:specialjoingraphs2} For $n_1, n_2, \dots, n_\ell\in \mathbb{N}$ let $K_{n_{1},n_{2},\dots,n_{\ell}}$ be the complete $\ell$-partite graph.
    For any $x_{i} \in V_{i}$ and $x_{j} \in V_{j}$ with $i \neq j$ and $n_i \geq n_j$,
    \[
    \frac{\kappa^{\alpha}(x_{i},x_{j})}{1-\alpha} = 
    \begin{cases}
        1-\frac{n_i-2}{n-n_j} & \text{ if } \frac{1}{d_j+1} \leq \alpha \leq 1\\
        \frac{1+\alpha}{1-\alpha} -\frac{n_i}{n-n_j} & \text{ if } 0 \leq \alpha \leq \frac{1}{d_j+1}.
    \end{cases}
    \]
    \item\label{cor:specialjoingraphs3} If $N_{G_{i}}[x_{i}] = V(G_{i})$ (i.e., $d'_{i} = n_{i}-1$), then
    \[
    \frac{\kappa^{\alpha}(x_{i}, x_{j})}{1-\alpha} = 
    \begin{cases}
        \frac{n}{n-1} & \text{ if } \frac{1}{n}\leq \alpha \leq 1 \\
        \frac{2}{1-\alpha} - \frac{n}{n-1} & \text{ if } 0 \leq \alpha \leq \frac{1}{n}\\
    \end{cases}
    \]
\end{enumerate}
In particular, for every such edge $\{x_{i},x_{j}\}$, the Lin-Lu-Yau Ricci curvature is strictly positive.
\end{corollary}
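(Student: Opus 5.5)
The plan is to derive each of the three items from Proposition~\ref{prop:graphjoins} by specializing its parameters. In each case I will check that the equality condition of that proposition holds, namely that $G[N_{G_i}[x_i]]$ and $G[V_i\setminus N_{G_i}[x_i]]$ are disjoint, i.e.\ there is no edge of $G_i$ between $N_{G_i}[x_i]$ and $V_i\setminus N_{G_i}[x_i]$. Where that condition fails, I will instead use the fact that $\operatorname{diam}(G)\le 2$ and reduce to Corollary~\ref{coro:diam} (regular case) or to a direct certificate via Theorem~\ref{prop:TransportationPlan}. Positivity of $\kappa^{\mathrm{LLY}}$ will then follow by letting $\alpha\to1^-$ in the resulting formulas.

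\textbf{Item (2).} For $K_{n_1,\dots,n_\ell}$ each $G_i$ is edgeless. Hence $d_i'=0$, $N_{G_i}[x_i]=\{x_i\}$, and the disjointness condition holds trivially, so equality holds in Proposition~\ref{prop:graphjoins}. We have $d_i=n-n_i\le d_j=n-n_j$, since $n_i\ge n_j$. Substituting $d_i'=0$, the threshold $\max\{0,\frac{d_i-d_i'(d_j-d_i)}{\cdots}\}$ becomes $\frac{d_i}{d_id_j+d_i}=\frac{1}{d_j+1}$, so the middle range collapses. The first line gives $1-\frac{n-d_i-2}{d_j}=1-\frac{n_i-2}{n-n_j}$, and the third line gives $\frac{1+\alpha}{1-\alpha}-\frac{n_i}{n-n_j}$, which is exactly the claimed formula.

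\textbf{Item (3).} If $d_i'=n_i-1$, then $V_i\setminus N_{G_i}[x_i]=\emptyset$ and $x_i$ is adjacent to every other vertex, so $d_i=n-1\ge d_j$. Therefore $d_i=d_j=n-1$, and $N(x_i)\cap N(x_j)=V\setminus\{x_i,x_j\}$, so $A=B=\emptyset$. Theorem~\ref{thm:main} (or Corollary~\ref{coro:diam}) with $d=n-1$, $|A|=0$ and $t$ vacuous then gives $\frac{d+1}{d}=\frac{n}{n-1}$ for $\alpha\ge\frac1n$. For $\alpha<\frac1n$, the only transport needed is the direct exchange of mass between $x_i$ and $x_j$, of amount $\frac{1-\alpha}{d}-\alpha$. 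This gives $W_1=\frac{1-\alpha}{n-1}-\alpha$, hence $\kappa^\alpha/(1-\alpha)=\frac{2}{1-\alpha}-\frac{n}{n-1}$. I will verify this value directly from the plan, which is optimal by the lower bound in Lemma~\ref{lem:Wrange}.

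\textbf{Item (1).} With $d_i=d_j=d$, the middle range of the proposition again collapses, since the threshold becomes $\frac{d}{d^2+d}=\frac1{d+1}$. The formula, however, should not depend on the disjointness condition, so I would rather use the regular-degree result Theorem~\ref{thm:main} directly. In the join, $A=V_i\setminus N_{G_i}[x_i]$ and $B=V_j\setminus N_{G_j}[x_j]$, and each vertex of $A$ is adjacent to each vertex of $B$. So $M_1$ is a perfect matching, $|M_1|=|A|$, $t\le1$, and $3|A|-2|M_1|-|M_2|=|A|$. Since $d_i=d_j$ we have $|A|=|B|$, and $|A|=n-2-\delta-|A|$, so $|A|=n-1-d$ using $d=\delta+1+|A|$.

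\textbf{Main obstacle.} Substituting these into the $\alpha\ge\frac1{d+1}$ case gives $\frac{d+1-(n-1-d)}{d}=2-\frac{n-2}{d}$. The $\alpha<\frac1{d+1}$ case with $t=1$ gives $\frac{d-1-|A|}{d}+\frac{2\alpha}{1-\alpha}=\frac{2}{1-\alpha}-\frac nd$ after simplification. I expect the main obstacle to be the edge case $A=\emptyset$, where $t$ is undefined; there I would reuse the computation from item (3). Finally, letting $\alpha\to1^-$, I would check positivity in each item: $2-\frac{n-2}{d}>0$ needs $2d>n-2$, which holds because $d\ge n-n_i$ and $d\ge n-n_j$ sum to $2d\ge n$. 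The remaining items follow similarly, using $n-n_j\ge n_i$.
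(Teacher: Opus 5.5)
Your proof is correct. For item (2) it matches what the paper intends: the paper gives no separate proof of this corollary and reads it off from Proposition~\ref{prop:graphjoins} by substitution. Setting $d_i'=0$ makes the threshold collapse to $\frac{1}{d_j+1}$, and the disjointness condition holds vacuously.

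For items (1) and (3) you take a different route, through the equal-degree formula of Theorem~\ref{thm:main} and a direct computation. For (1) this is the more robust choice. Substituting $d_i=d_j$ into Proposition~\ref{prop:graphjoins} does give the right numbers: the $d_i'$ terms cancel and the middle range collapses, as you note. However, the proposition ties equality in its first line to the disjointness of $G[N_{G_i}[x_i]]$ and $G[V_i\setminus N_{G_i}[x_i]]$, whereas (1) is asserted unconditionally. Reading (1) off the proposition therefore needs an extra remark. When $d_i=d_j$, the mass $\frac{1-\alpha}{d_i}-\frac{1-\alpha}{d_j}$ that its proof sends over distance $2$ is zero, so the disjointness condition plays no role. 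Your route gets this for free, because every pair in $A\times B$ is adjacent in a join.

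Your route does cost a hypothesis check. Theorem~\ref{thm:main} is stated for regular graphs, and the join need not be regular. You should therefore cite the paper's remark that Algorithm~\ref{alg:regular} only uses $\deg x=\deg y$, or use Corollary~\ref{coro:diam}, which is stated in that generality.

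A simpler option avoids this check. In both (1) and (3), your plan moves every unit of mass along an edge: between $x_i$ and $x_j$, and along a perfect matching of the complete bipartite graph between $A$ and $B$. So the plan attains the lower bound $\tfrac12\|m^\alpha_{x_i}-m^\alpha_{x_j}\|_1$ of Lemma~\ref{lem:Wrange}. This makes the argument self-contained, treats (3) as the case $d=n-1$ of (1), and removes the separate case $A=\emptyset$ where $t$ is undefined.

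There is also a harmless labelling slip. With $x=x_i$ and $y=x_j$, one has $A=N_G(x_i)\setminus N_G[x_j]=V_j\setminus N_{G_j}[x_j]$ and $B=V_i\setminus N_{G_i}[x_i]$, not the other way round. Since $|A|=|B|$ and $K=H_1$, nothing in your computation changes.
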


\begin{example} \label{ex:wheel graph}
Let $C_n$ be a cycle graph on $n$ vertices and $K_1$ be the graph consisting of a single vertex.
For $\{x_1,x_2\}\in E(C_n \vee K_1)$ with $x_1 \in V(C_n)$ and $x_2 \in V(K_1)$, $\kappa^{\mathrm{LLY}}(x_1,x_2)$ is negative if $n\geq 13$.
On the other hand, for any $\{x_1,x_2\}\in E(\overline{K_n} \vee K_1)$, the curvature $\kappa^{\mathrm{LLY}}(x_1,x_2)$ is positive for all $n$ by Corollary \ref{cor:specialjoingraphs}\eqref{cor:specialjoingraphs2}.
{Since $C_n \vee K_1$ contains $\overline{K_n} \vee K_1$ as a subgraph, one might guess that the curvature $\kappa^{\mathrm{LLY}}(x_1,x_2)$ should be larger in $C_n \vee K_1$ due to the presence of more paths. However, the computation contradicts this naive expectation.}
\end{example}

\section*{Acknowledgments}
The first author was supported by the National Research Foundation of Korea (NRF) grant funded by the Korean government (MSIT) (No.~RS-2024-00414849).
The second author was supported by the Basic Science Research Program through the National Research Foundation of Korea (NRF) funded by the Ministry of Education (2018R1D1A1B05048450).
The third author was supported by the National Research Foundation of Korea (NRF) grant funded by the Korean government (MSIT) (No. RS-2024-00339854).
\bibliographystyle{amsplain}
\bibliography{reference.bib}

\end{document}